\theoremstyle{plain}      
\newtheorem{thm}{Theorem}[section]     
\newtheorem{theorem}[thm]{Theorem}
\newtheorem{lemma}[thm]{Lemma}
\theoremstyle{definition}
\newtheorem{example}[thm]{Example}
\DeclareMathAlphabet{\doba}{U}{msb}{m}{n}
\gdef\mR{\doba{R}}
\def\di{{\rm d}}
\newcommand{\definedas}{\mathrel{\raise.095ex\hbox{\rm :}\mkern-5.2mu=}}
\title{}
\author{Andrei Moroianu, Mihaela Pilca}
\address{Andrei Moroianu \\ Université Paris-Saclay, CNRS,  Laboratoire de mathématiques d'Orsay, 91405, Orsay, France}
\email{andrei.moroianu@math.cnrs.fr}
\address{Mihaela Pilca\\Fakult\"at f\"ur Mathematik\\
Universit\"at Regensburg\\Universit\"atsstr. 31 
D-93040 Regensburg, Germany}
\email{mihaela.pilca@mathematik.uni-regensburg.de}
\subjclass[2010]{53C05, 53C15, 53C35}
\keywords{Metric connections with torsion, symmetric spaces, compact Lie algebras}
\begin{document}   

\title{Metric connections with parallel  twistor-free torsion}

\begin{abstract}
The torsion of every metric connection on a Riemannian manifold has three components: one totally skew-symmetric, one of vectorial type, and one of twistorial type. In this paper we classify complete simply connected Riemannian manifolds carrying a metric connection whose torsion is parallel, has non-zero vectorial component and vanishing twistorial component.

\end{abstract}
\maketitle

\section{Introduction}

The main tool of Riemannian geometry is the Levi-Civita connection, which is the unique torsion-free metric connection on a given Riemannian manifold. However, on Riemannian manifolds carrying additional structures ({\em e.g.} Sasakian \cite{ad}, \cite{friedrich2}, almost Hermitian \cite{andrei}, \cite{schoeman}, hyperhermitian \cite{alex2}, $\mathrm{G}_2$ \cite{friedrich1}, Spin(7) \cite{ivanov}, homogeneous \cite{ambrose}, etc.), metric connections with torsion are more adapted in order to understand the underlying geometries. 

In most of the aforementioned cases, the torsion of the relevant metric connection is totally skew-symmetric and parallel. In \cite{c-swann}, Cleyton and Swann obtained the classification of metric connections with parallel skew-symmetric torsion whose holonomy representation is irreducible. They show that in this case, the manifold is either naturally reductive homogeneous, or nearly Kähler in dimension 6, or nearly parallel G$_2$ in dimension 7. 

However, since no analogue of the de Rham decomposition theorem holds for connections with torsion, the reducible case is more involved and not completely classified. A systematic study of this problem in the reducible case has been started recently in \cite{cms}, where it was shown that every Riemannian manifold carrying a metric connection with parallel skew-symmetric torsion is locally a Riemannian submersion with totally geodesic naturally reductive homogeneous fibers over a lower-dimensional manifold carrying a principal bundle with parallel curvature. 

The case of metric connections with torsion of vectorial type was considered by Agricola and Kraus in \cite{a}, where it is noticed that the condition of having parallel torsion is too restrictive, and is relaxed by asking that the corresponding 1-form is closed.

In the present paper we study a more general problem, by asking for the torsion of the metric connection to be parallel, while allowing it to have both a skew-symmetric and a vectorial component. Since the third component of the torsion is called twistorial component, such connections will be referred to as having twistor-free torsion.  Surprisingly, it turns out that if the vectorial component is non-zero, then the problem is completely solvable. 

The rough idea of the classification is as follows. In Theorem \ref{thred} we show that the presence of the vectorial component forces the manifold to decompose as a warped product with fiber $\mathbb{R}$, with explicit warping function, and with basis carrying a parallel 3-form $\tau$ satisfying the condition $(\tau_X)_*\tau=0$ for every basic tangent vector $X$ (here $\tau_X$ denotes the skew-symmetric endomorphism corresponding to $X\lrcorner\tau$ via the metric on the basis, and acting as a derivation on the exterior bundle). 

The condition $(\tau_X)_*\tau=0$ can be interpreted as the Jacobi identity for the bracket defined on each tangent space by $[X,Y]:=\tau_XY$, and induces a parallel compact type Lie algebra structure on the tangent bundle. We then show in Theorem \ref{thdec} that such structures decompose as products of irreducible bricks of four types: symmetric spaces of type II or IV, 3-dimensional Riemannian manifolds, simple Lie algebras of compact type with bi-invariant metric, and arbitrary Riemannian manifolds with abelian Lie algebra structure. 

Using these results, the complete classification of metric connections with parallel twistor-free torsion is given in Theorem \ref{thmain}.

{\bf Acknowledgments.} This work was supported by the Procope Project No. 57445459 (Germany) /  42513ZJ (France).

\section{Preliminaries}

Let us first introduce some notation and conventions used in this paper. On a  Riemannian manifold $(M,g)$ vectors and $1$-forms or $g$-skew-symmetric endomorphisms and $2$-forms will be as usually identified via the metric $g$. A $3$-form $\tau$ on $M$ will be identified with a tensor of type $(2,1)$ as follows:
\[\tau(X,Y,Z)=g(\tau_X Y, Z), \quad \forall X,Y,Z\in\Gamma(\mathrm{T}M).\]
In this way, the $2$-form $X\lrcorner\tau$ is identified with the skew-symmetric endomorphism $\tau_X$ for every tangent vector $X$. The kernel of a $3$-form $\tau$ at a point $x\in M$ is defined as follows: 
$$\mathrm{ker}(\tau):=\{X\in \mathrm{T}_x M\, \,|\, \tau_{X}=0\}.$$

For every $k\ge 0$, a skew-symmetric endomorphism $A$ of $\mathrm{T}M$ acts as a derivation on the bundle of exterior $k$-forms by the formula
\begin{equation}\label{action}A_*\sigma:=\sum_i Ae_i\wedge e_i\lrcorner\sigma,\qquad\forall \sigma\in \Gamma(\Omega^kM),
\end{equation}
where $\{e_i\}_i$ is a local orthonormal basis of $\mathrm{T}M$. For later use, note that if $k=2$ and $\sigma$ is identified with a skew-symmetric endomorphism via the metric, then $A_*\sigma$ is the $2$-form corresponding to the commutator $[A,\sigma]$.

Let $\nabla^g$ denote the Levi-Civita connection of $g$. Every other metric connection $\nabla$ on $(M,g)$ can be written as 
$$\nabla_X=\nabla^g_X+T_X,\qquad\forall X\in \mathrm{T}M,$$
where $T\in \Gamma(\Lambda^1M\otimes\mathrm{End}^-(\mathrm{T}M))$ is a 1-form on $M$ with values in the $g$-skew-symmetric endomorphisms of $\mathrm{T}M$. The tensor $T$ can be identified with the torsion $\widetilde T$ of $\nabla$ via the isomorphism 
$$\Lambda^1M\otimes\mathrm{End}^-(\mathrm{T}M)\to \Lambda^2 M\otimes \mathrm{T}M,\qquad T\mapsto \widetilde T, 
$$
with $\widetilde T(X,Y):=T_XY-T_YX$. 

Recall that the tensor product of the $\mathrm{SO}(n)$ representations $\mathbb{R}^n$ and $\Lambda^2\mathbb{R}^n$ decomposes as 
$$\mathbb{R}^n\otimes\Lambda^2\mathbb{R}^n=\mathbb{R}^n\oplus\mathcal{T}^{2,1}\mathbb{R}^n\oplus \Lambda^3\mathbb{R}^n,$$
where $\mathcal{T}^{2,1}$ is the Cartan summand and is generated by elements of the form $v\otimes \omega\in \mathbb{R}^n\otimes\Lambda^2\mathbb{R}^n$ which satisfy $v\wedge\omega=0$ and $v\lrcorner\omega=0$. The inclusions of $\mathbb{R}^n$ and $\Lambda^3\mathbb{R}^n$ into $\mathbb{R}^n\otimes\Lambda^2\mathbb{R}^n$ are given by 
$$\xi\mapsto \sum_ie_i\otimes (e_i\wedge\xi),\qquad \tau\mapsto \sum_i e_i\otimes (e_i\lrcorner\tau),$$
where $\{e_i\}_i$ is any orthonormal basis of $\mathbb{R}^n$.
Correspondingly, on every Riemannian manifold we have the decomposition
$$\Lambda^{1}M\otimes \Lambda^{2}M=\Lambda^{1}M\oplus \Lambda^{2,1}\mathrm{T}M\oplus \Lambda^{3}M.$$ 
Therefore, the torsion of a metric connection $\nabla=\nabla^g+T$, identified with the tensor $T\in \Gamma(\Lambda^{1}M\otimes \Lambda^{2}M)$,
can be decomposed as $T=T_1+T_2+T_3$ with: 
$$T_1\in \mathcal{T}_1(M):=\Gamma(\Lambda^{1}M),\qquad T_2\in \mathcal{T}_2(M):=\Gamma(\Lambda^{2,1}M),\qquad T_3\in \mathcal{T}_3(M):=\Gamma(\Lambda^{3}M). $$

By analogy with the Gray-Hervella classification of almost Hermitian structures, we will say that the torsion of a metric connection $\nabla=\nabla^g+T$ is of type 
$\mathcal{T}_i$ for $i\in\{1,2,3\}$ if $T_i\ne 0$ and $T_j=0$ for $j\ne i$. Similarly, for distinct $i,j\in\{1,2,3\}$ we say that the torsion of $\nabla$ is of type
$\mathcal{T}_i\oplus\mathcal{T}_j$ if $T_i\ne0$, $T_j\ne 0$ and $T_k=0$ for $k$ different from $i,j$.

Recall that the twistor operator on Riemannian manifolds acting on $2$-forms is the projection of their covariant derivative from $\Gamma(\Lambda^{1}M\otimes \Lambda^{2}M)$ onto $\Gamma(\Lambda^{2,1}\mathrm{T}M)=\mathcal{T}_2(M)$. We will thus call the $\mathcal{T}_2(M)$-component of a tensor in $\Gamma(\Lambda^{1}M\otimes \Lambda^{2}M)$ its twistorial component. Correspondingly, a metric connection is said to have twistor-free torsion if its torsion is of type $\mathcal{T}_1\oplus\mathcal{T}_3$ and twistor-like torsion if its torsion is of type $\mathcal{T}_2$.

The aim of this paper is the classification of complete simply connected Riemannian manifolds admitting a metric connection $\nabla$ whose torsion is twistor-free and $\nabla$-parallel.

\section{Metric connections with parallel twistor-free torsion}

Let $(M,g_M)$ be a complete simply connected $n$-dimensional Riemannian manifold with Levi-Civita connection $\nabla^{g_M}$, which is endowed with a  metric connection $\nabla$ with twistor-free torsion. Assume moreover that the torsion is also $\nabla$-parallel. By the above considerations, there exists a non-zero vector field $\xi\in\Gamma(\mathrm{T}M)$ and a non-zero $3$-form $\nu\in\Omega^3(M)$, such that for all vector fields $X\in\Gamma(\mathrm{T}M)$ the following identity holds:
\begin{equation}\label{defnabla}
\nabla_X=\nabla^{g_M}_X+X\wedge \xi+\nu_X.
\end{equation}
The fact that the torsion of $\nabla$ is $\nabla$-parallel is clearly equivalent to $\nabla \xi=0$ and $\nabla \nu=0$. 

Our first aim is to reduce the problem to the study of a metric connection with parallel skew-symmetric torsion satisfying some further algebraic constraint on an $(n-1)$-dimensional Riemannian manifold. We start with the following technical result:

\begin{lemma}\label{elemprop}
With the above notation, the following identities hold:
$$\nu_{\xi}=0, \quad \di\xi=0, \quad \sum_{i=1}^{n}\nu_{e_i}\wedge \nu_{e_i}=0, \quad \di\nu=3\xi\wedge\nu,$$
where $\{e_i\}_i$ is a local orthonormal basis of $\mathrm{T}M$.
\end{lemma}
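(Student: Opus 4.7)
The plan is to translate the parallelism hypotheses $\nabla\xi=0$ and $\nabla\nu=0$ into explicit formulas for $\nabla^{g_M}\xi$ and $\nabla^{g_M}\nu$, derive $d\xi$, $d\nu$ and $d\nu_\xi$ from these, and then bootstrap the four identities using $d^2=0$ and the Cartan magic formula. From $\nabla\xi=0$ together with \eqref{defnabla}, one reads off
\[\nabla^{g_M}_X\xi=|\xi|^2X-g(X,\xi)\xi+\nu_\xi X,\]
whose symmetric part does not contribute to $d\xi$ and whose antisymmetric part gives $d\xi=2\nu_\xi$. The analogous rewriting of $\nabla\nu=0$ reads $\nabla^{g_M}_X\nu=-(X\wedge\xi)_*\nu-(\nu_X)_*\nu$. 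Substituting into $d\sigma=\sum_ie_i\wedge\nabla^{g_M}_{e_i}\sigma$ and expanding each $(T_{e_i})_*\sigma$ via \eqref{action}, then using the identity $\sum_ie_i\wedge(e_i\lrcorner\sigma)=k\sigma$ on $k$-forms, one obtains
\[d\nu=3\xi\wedge\nu+2\sum_j\nu_{e_j}\wedge\nu_{e_j},\qquad d\nu_\xi=2\xi\wedge\nu_\xi+2\sum_j\nu_{e_j}\wedge\nu_\xi e_j,\]
where for the second formula I use that $\nu_\xi=\xi\lrcorner\nu$ is itself $\nabla$-parallel, as a contraction of parallel tensors.

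To derive $\nu_\xi=0$ (and hence $d\xi=0$), observe that $\nu_\xi=\tfrac12 d\xi$ is exact, so $d\nu_\xi=0$; combined with the formula above this gives $\xi\wedge\nu_\xi+\sum_j\nu_{e_j}\wedge\nu_\xi e_j=0$. Contract by $\xi$: the first summand produces $|\xi|^2\nu_\xi$, using $\xi\lrcorner\nu_\xi=\nu_\xi\xi=0$; each $\xi\lrcorner(\nu_{e_j}\wedge\nu_\xi e_j)$ vanishes because expanding via the Leibniz rule and substituting $\xi\lrcorner\nu_{e_j}=-\nu_\xi e_j$ leaves only the wedge of the $1$-form $\nu_\xi e_j$ with itself. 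Hence $|\xi|^2\nu_\xi=0$, which forces $\nu_\xi=0$ since $\xi\ne 0$, and therefore $d\xi=0$.

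With $\nu_\xi=0$ in hand, the $4$-form $A:=\sum_j\nu_{e_j}\wedge\nu_{e_j}$ satisfies $\xi\lrcorner A=0$ (same $1$-form-squared argument) and is $\nabla$-parallel as a natural contraction of $\nu$. Applying $d$ to $d\nu=3\xi\wedge\nu+2A$ and using $d\xi=0$ yields $dA=3\xi\wedge A$, and Cartan's formula gives $L_\xi A=\xi\lrcorner dA=3|\xi|^2A$. On the other hand, the identity $L_X\omega(Y_1,\ldots,Y_k)=\sum_j\omega(Y_1,\ldots,T_XY_j+\nabla^{g_M}_{Y_j}X,\ldots,Y_k)$, valid for any $\nabla$-parallel $k$-form $\omega$, specialized to $X=\xi$ and simplified using $T_\xi=\nu_\xi=0$ and $\nabla^{g_M}_{Y_j}\xi=|\xi|^2Y_j-g(Y_j,\xi)\xi$, gives $L_\xi\omega=k|\xi|^2\omega$ whenever $\xi\lrcorner\omega=0$; with $k=4$ this reads $L_\xi A=4|\xi|^2A$. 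Comparing forces $A=0$, so $d\nu=3\xi\wedge\nu$. The main difficulty is bookkeeping: unpacking $(X\wedge\xi)_*$ and $(\nu_X)_*$ acting on $k$-forms and keeping consistent sign conventions for the metric identifications between $1$-forms and vectors. The two structural observations driving the argument are the $\xi$-contraction of $d\nu_\xi=0$, which isolates $|\xi|^2\nu_\xi$, and the degree mismatch $4\ne 3$ in the last step, which turns the relation $dA=3\xi\wedge A$ into the vanishing of $A$.
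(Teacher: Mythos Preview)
Your proof is correct, and the first three identities ($\nu_\xi=0$, $d\xi=0$, $\xi\lrcorner A=0$) are obtained essentially as in the paper: both arguments compute $d\xi=2\nu_\xi$, derive a formula for $d\nu_\xi$, and contract the resulting relation with $\xi$ to force $\nu_\xi=0$.

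The genuine difference lies in the proof that $A=\sum_i\nu_{e_i}\wedge\nu_{e_i}$ vanishes. The paper computes $d\alpha$ directly from $\nabla^{g_M}_X\alpha$, obtaining $d\alpha=4\xi\wedge\alpha$; combining this with $d^2\nu=0$ (which gives $-6\xi\wedge\alpha+2d\alpha=0$) yields $2\xi\wedge\alpha=0$, and then $\xi\lrcorner\alpha=0$ finishes. You instead extract $dA=3\xi\wedge A$ from $d^2\nu=0$ alone, and then compute $L_\xi A$ in two ways: Cartan gives $3|\xi|^2A$, while the general identity $L_\xi\omega=k|\xi|^2\omega$ for $\nabla$-parallel $k$-forms with $\xi\lrcorner\omega=0$ gives $4|\xi|^2A$. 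This degree mismatch is a clean replacement for the paper's somewhat lengthy direct calculation of $d\alpha$; conversely, the paper's route is more self-contained in that it never invokes the Lie derivative or the auxiliary formula for $L_\xi\omega$. One small point worth making explicit: your conclusion ``$|\xi|^2\nu_\xi=0$ forces $\nu_\xi=0$ since $\xi\ne0$'' tacitly uses that $|\xi|$ is constant (hence nowhere zero), which follows from $\nabla\xi=0$ and $\nabla g=0$.
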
	

\begin{proof}
Up to rescaling the metric $g$, we may assume that the parallel vector field $\xi$ has constant length equal to $1$.
	Applying \eqref{defnabla} to $\xi$ yields $\nabla^{g_M}_X\xi=X-\<X,\xi\>\xi-\nu_X\xi$ for every $X\in \mathrm{T}M$. Hence we obtain:
	$$\di \xi=\sum_i{e_i}\wedge\nabla^{g_M}_{e_i}\xi=\sum_i e_i\wedge(e_i-\<e_i,\xi\>\xi-\nu_{e_i}\xi)=\sum_i e_i\wedge (e_i\lrcorner\nu_{\xi})=2\nu_\xi.$$
	We compute now the covariant derivative of $\nu_\xi$ using \eqref{action}, \eqref{defnabla} and the fact that both $\xi$ and $\nu$ are $\nabla$-parallel as follows:
	$$\nabla^{g_M}_X\nu_\xi=-\sum_i(X\wedge \xi)e_i\wedge e_i\lrcorner\nu_\xi-\sum_i\nu_Xe_i\wedge e_i\lrcorner\nu_\xi=-\xi\wedge \nu_\xi X+\sum_i\nu_{e_i}X\wedge \nu_\xi e_i.$$
	Taking the wedge product with $X$ and summing over $X=e_i$, we obtain:
	\begin{equation*}
	\begin{split}
	\di\nu_\xi&=\sum_i e_i\wedge \nabla^{g_M}_{e_i}\nu_\xi=\sum_i e_i\wedge\left(-\xi\wedge \nu_\xi e_i+\sum_j\nu_{e_j}e_i\wedge \nu_\xi e_j\right)=2\xi\wedge \nu_\xi-2\sum_i \nu_{e_i}\wedge \nu_{e_i}\xi\\
	&=2\xi\wedge \nu_\xi-\xi\lrcorner\sum_i \nu_{e_i}\wedge \nu_{e_i}.
	\end{split}
	\end{equation*}
On the other hand, since $\di\xi=2\nu_\xi$, it follows that $\di\nu_\xi=0$. The above computation then implies that 
$$2\xi\wedge \nu_\xi=\xi\lrcorner\sum_i \nu_{e_i}\wedge \nu_{e_i}.$$
Taking a further interior product with $\xi$ and using that $\xi$ is nowhere vanishing (being non-zero and $\nabla$-parallel) yields $\nu_\xi=0$, hence also 
$\di\xi=2\nu_\xi=0$ and 
\begin{equation}\label{alpha}\xi\lrcorner\sum_i \nu_{e_i}\wedge \nu_{e_i}=0.
\end{equation} 
We further compute the covariant derivative of $\nu$ and its exterior differential:
\begin{eqnarray*}
\nabla^{g_M}_X\nu&=&-(X\wedge \xi)_*\nu-(\nu_X)_*\nu=-\sum_i(X\wedge \xi)e_i\wedge e_i\lrcorner\nu-\nu_X e_i\wedge e_i\lrcorner \nu\\
&=&-\xi\wedge\nu_X+\sum_i\nu_{e_i}X\wedge \nu_{e_i},\\
\di\nu&=&\sum_i e_i\wedge\nabla^{g_M}_{e_i}\nu=-\sum_i e_i\wedge\xi\wedge\nu_{e_i}+\sum_{i,j}e_i\wedge\nu_{e_j}e_i\wedge \nu_{e_j}=3\xi\wedge\nu+2\alpha,
\end{eqnarray*}
where $\displaystyle\alpha:=\sum_{i} \nu_{e_i}\wedge \nu_{e_i}$.
It remains to show that $\alpha$ vanishes. The covariant derivative of $\alpha$ and its exterior differential are obtained as follows, where we assume furthermore that the orthonormal basis $\{e_i\}_i$ is parallel with respect to the Levi-Civita connection $\nabla^{g_M}$ at the point where the computation is done:
\begin{equation*}
\begin{split}
\nabla^{g_M}_X\alpha&=\sum_i \nabla^{g_M}_X(\nu_{e_i}\wedge \nu_{e_i})=\sum_i 2\nu_{e_i}\wedge\nabla^{g_M}_X\nu_{e_i}=2\sum_i \nu_{e_i}\wedge\nabla^{g_M}_X\nu_{e_i}\\
&=-2\sum_{i,j} \nu_{e_i}\wedge(X\wedge\xi)(e_j)\wedge e_j\lrcorner\nu_{e_i}-2\sum_{i,j}\nu_{e_i}\wedge\nu_Xe_j\wedge e_j\lrcorner \nu_{e_i}\\
&=-2\sum_i \nu_{e_i}\wedge \xi \wedge \nu_{e_i} X+2\sum_i \nu_{e_i}\wedge X \wedge \nu_{e_i} \xi+2\sum_{i,j}\nu_{e_j}X\wedge  \nu_{e_i} e_j\wedge \nu_{e_i}\\
&=-2\sum_i \xi \wedge \nu_{e_i} X\wedge \nu_{e_i}+2\sum_{i,j}\nu_{e_j}X\wedge  \nu_{e_i} e_j\wedge \nu_{e_i},
\end{split}
\end{equation*}
\begin{equation*}
\begin{split}
\di\alpha&=\sum_i e_i\wedge\nabla^{g_M}_{e_i}\alpha=-2\sum_{ij} e_i\wedge \xi\wedge \nu_{e_j}e_i\wedge \nu_{e_j}+2\sum_{i,j,k}e_i\wedge \nu_{e_j}e_i\wedge  \nu_{e_k} e_j\wedge \nu_{e_k}\\
&=4\sum_j \xi\wedge \nu_{e_j}\wedge \nu_{e_j}+4\sum_{j,k}\nu_{e_j}\wedge \nu_{e_k}e_j\wedge \nu_{e_k}=4\xi\wedge \alpha.
\end{split}
\end{equation*}
We now obtain:
$$0=\di^2\nu=\di(3\xi\wedge \nu+2\alpha)=-3\xi\wedge \di\nu+2\di\alpha=-6\xi\wedge \alpha+8\xi\wedge \alpha=2\xi\wedge \alpha,$$
showing that $\alpha=0$, because $\xi\lrcorner\alpha=\displaystyle\xi\lrcorner\sum_i\nu_{e_i}\wedge \nu_{e_i}=0$ by \eqref{alpha}. This concludes the proof of the lemma.
\end{proof}

We can now state our reduction result:

\begin{theorem}\label{thred}
	A complete simply connected Riemannian manifold $(M,g_M)$ carries a metric connection with parallel twistor-free torsion if and only if $(M, g_M)$ is homothetic to a warped product $(N\times \mR, e^{2t}g_N+\di t^2)$, where  $(N, g_N)$ is a complete simply connected Riemannian manifold carrying a parallel $3$-form $\tau\in\Omega^3(N)$ which satisfies $\tau_X\tau=0$, for all $X\in \mathrm{T}N$.
\end{theorem}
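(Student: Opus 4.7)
The plan is to use Lemma \ref{elemprop} to first extract a warped product structure from the properties of $\xi$, then to descend $\nu$ to a $3$-form $\tau$ on the base, and finally to translate the algebraic identity $\sum_i \nu_{e_i}\wedge\nu_{e_i}=0$ into the condition $\tau_X\tau = 0$. The converse direction is a direct verification obtained by reversing these steps.

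For the forward implication, after rescaling so that $|\xi|=1$ (which accounts for the homothety in the statement), I apply \eqref{defnabla} to $\xi$ and use $\nabla\xi=0$ together with $\nu_X\xi=-\nu_\xi X=0$ from Lemma \ref{elemprop} to obtain $\nabla^{g_M}_X\xi = X-\langle X,\xi\rangle\xi$. Since $\di\xi=0$ and $M$ is simply connected, $\xi=\di t$ for a smooth function $t$ with unit gradient whose integral curves are geodesics. Setting $N:=t^{-1}(0)$, completeness of $(M,g_M)$ implies completeness of $N$; the flow $\phi_s$ of $\xi$ gives a diffeomorphism $N\times\mR\cong M$, and simple connectedness transfers to $N$. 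The identity $(\mathcal{L}_\xi g_M)(U,V) = 2(g_M(U,V)-\langle U,\xi\rangle\langle V,\xi\rangle)$ derived from $\nabla^{g_M}\xi$ integrates along $\xi$-flow lines to $g_M((\phi_s)_*X,(\phi_s)_*Y)=e^{2s}g_M(X,Y)$ for $X,Y\in TN$, yielding the warped form $g_M=e^{2t}g_N+\di t^2$.

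For the descent, $\xi\lrcorner\nu=0$ and $\di\nu=3\xi\wedge\nu$ (Lemma \ref{elemprop}) give $\mathcal{L}_\xi\nu=\xi\lrcorner(3\xi\wedge\nu)=3\nu$ by Cartan's formula, hence $\nu=e^{3t}\pi^*\tau$ with $\tau:=\nu|_N\in\Omega^3(N)$ (where $\pi:N\times\mR\to N$ is the projection). Rewriting $\nabla\nu=0$ via \eqref{defnabla} as $\nabla^{g_M}_X\nu=-\xi\wedge\nu_X-(\nu_X)_*\nu$ and restricting to horizontal $X$ and horizontal arguments, standard warped-product identities let me replace $\nabla^{g_M}$ by $\nabla^{g_N}$, yielding $\nabla^{g_N}\tau=0$ on $N$. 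The algebraic condition comes from $\sum_i\nu_{e_i}\wedge\nu_{e_i}=0$ of Lemma \ref{elemprop}: choosing an ONB for $(M,g_M)$ adapted to the warped product shows this descends to $\sum_j\tau_{f_j}\wedge\tau_{f_j}=0$ on $N$, and contracting this $4$-form with $X\in TN$ yields $X\lrcorner\sum_j\tau_{f_j}\wedge\tau_{f_j}=-2(\tau_X)_*\tau$, so the vanishing is equivalent to $\tau_X\tau=0$ for every $X\in TN$.

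For the converse, starting from $(N,g_N,\tau)$ with $\nabla^{g_N}\tau=0$ and $\tau_X\tau=0$, I set $M:=N\times\mR$, $g_M:=e^{2t}g_N+\di t^2$, $\xi:=\partial_t$, $\nu:=e^{3t}\pi^*\tau$, and $\nabla:=\nabla^{g_M}+X\wedge\xi+\nu_X$, then reverse the above computations to verify $\nabla\xi=0$ and $\nabla\nu=0$. The main technical obstacle is the bookkeeping in relating $M$-side identities (using a $g_M$-ONB) to $N$-side identities (using a $g_N$-ONB), since the warped product has distinct scaling factors ($e^{2t}$ for the metric, $e^{3t}$ for $\nu$).
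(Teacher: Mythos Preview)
Your proof is correct and follows the same overall strategy as the paper: use Lemma~\ref{elemprop} to get the warped product, descend $\nu$ via Cartan's formula, and deduce $(\tau_X)_*\tau=0$ from the contraction identity $X\lrcorner\sum_i\nu_{e_i}\wedge\nu_{e_i}=-2(\nu_X)_*\nu$.

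The one place where you diverge is in establishing the warped product structure. You integrate $\mathcal{L}_\xi g_M = 2(g_M-\eta\otimes\eta)$ along the geodesic flow of $\xi$ to obtain $g_M=e^{2t}g_N+\di t^2$ directly. The paper instead performs the conformal change $\tilde g:=e^{-2t}g_M$ and checks via the conformal transformation formulas that $e^t\xi$ is $\tilde g$-parallel of unit length, so that $(M,\tilde g)$ splits globally as $(N,g_N)\times(\mathbb{R},\di s^2)$. Both routes are standard; yours is more hands-on, while the paper's conformal trick packages the global diffeomorphism into a single de Rham-type splitting. For the verification of $\nabla^{g_N}\tau=0$, your appeal to ``standard warped-product identities'' is correct but vague; the paper makes this explicit by noting that the second fundamental form of $N\subset M$ is the identity and computing $(\nabla^{g_N}_X\tau)(Y,Z,W)$ at a point of $N$ using $\nabla^{g_M}_X\nu=-(X\wedge\xi+\nu_X)_*\nu$ together with $(\nu_X)_*\nu=0$ and $\xi\lrcorner\nu=0$.
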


\begin{proof}
Let $\nabla$ be a metric connection on $(M, g_M)$ with parallel twistor-free torsion given as:
\begin{equation}\label{defconn}
\nabla_X=\nabla^{g_M}_X+X\wedge \xi+\nu_X,
\end{equation}
with $\nabla \xi=0$ and $\nabla \nu=0$. After rescaling the metric if necessary, one can assume that $\xi$ has unit length. Let $\eta:=g_M(\xi,\cdot)$ denote the metric dual of $\xi$. By Lemma \ref{elemprop} one has $\di\eta=0=\nu_\xi$, so applying \eqref{defconn} to $\xi$ yields for all vector fields $X$ on $M$:
\begin{equation}\label{formxi}
\nabla^{g_M}_X\xi=X-\eta(X)\xi.
\end{equation}

By assumption, $M$ is simply connected, so there exists a function $t:M\to\mathbb{R}$ with $\di t=\eta$. We denote by $N:=t^{-1}(0)$ the level hypersurface of $t$ at $0$, endowed with the induced Riemannian metric $g_N$. Consider the new metric on $M$ given by $\tilde g:=e^{-2t}g_M$.
From the standard conformal change formulas we have for any vector fields $X,Y$ on $M$:
\begin{eqnarray*}\nabla^{\tilde g}_XY&=&\nabla^{g_M}_XY-X(t)Y-Y(t)X+g_M(X,Y)\mathrm{grad}^{g_M}t\\
&=&\nabla^{g_M}_XY-\eta(X)Y-\eta(Y)X+g_M(X,Y)\xi.\end{eqnarray*}
Applying this formula to $Y:=e^{t}\xi$ and using \eqref{formxi} yields for every vector field $X$:
\begin{eqnarray*} \nabla^{\tilde g}_X(e^t\xi)&=&\nabla^{g_M}_X(e^t\xi)-\eta(X)e^t\xi-e^tX+g_M(X,e^t\xi)\xi\\&=&e^t(X(t)\xi+\nabla^{g_M}_X\xi-\eta(X)\xi-X+\eta(X)\xi)=0. \end{eqnarray*}

Consequently, the vector field $e^t\xi$ is parallel and has unit length on $(M,e^{-2t}g_M)$. This shows that $(M,e^{-2t}g_M)$ is globally isometric to $(N,g_N)\times (\mathbb{R},\di s^2)$, where $s$ is determined by the fact that $\di s$ is the metric dual of $e^t\xi$ with respect to $\tilde g$, {\em i.e.} $\di s=\tilde g(e^t\xi,\cdot)=e^{-t}\eta=e^{-t}\di t$. This shows that $M=N\times \mathbb{R}$ and $g_M=e^{2t}(g_N+\di s^2)= e^{2t}g_N+\di t^2$.

Using Cartan's formula and Lemma~\ref{elemprop} we compute:
\[\mathcal{L}_{\xi} \nu=\di\nu_\xi+\xi\lrcorner d\nu=3\xi\lrcorner(\xi\wedge \nu)=3\nu.\]
Since $\xi=\frac{\partial}{\partial t}$, there exists $\tau\in\Omega^3(N)$ such that $\nu=e^{3t}\tau$. 

Let $\{e_i\}_i$ be an orthonormal basis of $\mathrm{T}_xM$, for some $x\in M$. By Lemma \ref{elemprop}, for every $X\in \mathrm{T}_xM$ we have
\begin{equation}\label{nu}(\nu_X)_*\nu=\sum_i\nu_Xe_i\wedge\nu_{e_i}=-\sum_i\nu_{e_i}X\wedge\nu_{e_i}=-\frac12 X\lrcorner\sum_i\nu_{e_i}\wedge\nu_{e_i}=0.\end{equation}
This shows in particular that $(\tau_X)_*\tau=0$, for all $X\in \mathrm{T}N$. 

It remains to check that $\nabla^{g_N}\tau=0$. Let $x\in N$ and $X,Y,Z,W\in \Gamma(\mathrm{T}N)$ which are $\nabla^{g_N}$-parallel at $x$. We extend these vector fields to $M$ arbitrarily.
By \eqref{formxi}, the second fundamental form of the hypersurface $N\subset M$ is the identity of $\mathrm{T}N$, so at $x$ we have $\nabla^{g_M}_XY=-g_M(X,Y)\xi$. Using \eqref{nu} together with the fact that $\xi\lrcorner\nu=0$ and $\nabla\nu=0$, we can compute at $x$:
\begin{eqnarray*}(\nabla^{g_N}_X\tau)(Y,Z,W)&=&X(\tau(Y,Z,W))=X(\nu(Y,Z,W))=(\nabla^{g_M}_X\nu)(Y,Z,W)\\
&=&-((X\wedge\xi+\nu_X)_*\nu)(Y,Z,W)=(\xi\wedge\nu_X-(\nu_X)_*\nu)(Y,Z,W)=0.
\end{eqnarray*}

The converse statement follows in a straightforward way, by reversing the above computation.
\end{proof}

\section{Examples on symmetric spaces}

In this section we investigate the condition given in the conclusion of Theorem \ref{thred} in the framework of symmetric spaces. We first show that examples of  complete simply connected Riemannian manifolds carrying a non-zero parallel $3$-form $\tau$ satisfying $\tau_X\tau=0$ for all tangent vectors $X$ are provided by symmetric spaces of type II and IV, and then we prove that these are the only examples in the irreducible case. 

Let $G/H$ be an irreducible Riemannian symmetric space, where $G$ is a simply connected Lie group and $H$ is a compact subgroup of $G$. By definition, there is a decomposition of the Lie algebra of $G$ as: $\mathfrak{g}=\mathfrak{h}\oplus\mathfrak{m}$, where $\mathfrak{h}$ is the Lie algebra of $H$, 
\begin{equation}\label{mh}[\mathfrak{m}, \mathfrak{m}]\subseteq\mathfrak{h}, \qquad [\mathfrak{h}, \mathfrak{m}]\subseteq\mathfrak{m},
\end{equation} 
and $\mathfrak{m}$ carries an $\mathfrak{h}$-invariant scalar product $\<\cdot, \cdot\>_\mathfrak{m}$.  We denote by $\lambda\colon\mathfrak{h}\to \mathrm{so}(\mathfrak{m})\simeq\Lambda^2\mathfrak{m}$ the differential at the identity of the isotropy representation of $H$. 

Let $B_{\mathfrak{g}}\in \mathrm{Sym}^2(\mathfrak{g}^*)$ denote the Killing form of $\mathfrak{g}$, defined by $B_{\mathfrak{g}}(X,Y):=\mathrm{Tr}(\mathrm{ad}(X)\circ \mathrm{ad}(Y))$ for every $X,Y$ in $\mathfrak{g}$. Clearly $B_{\mathfrak{g}}(X,Y)=0$ for every $X\in\mathfrak{h}$ and $Y\in \mathfrak{m}$ (since by \eqref{mh}, $\mathrm{ad}(X)\circ \mathrm{ad}(Y)$ maps  $\mathfrak{m}$ to  $\mathfrak{h}$ and  $\mathfrak{h}$ to  $\mathfrak{m}$).
Recall that $G/H$ is called of compact type if $B_{\mathfrak{g}}$ is positive definite on $\mathfrak{m}$ and of non-compact type if $B_{\mathfrak{g}}$ is negative definite on $\mathfrak{m}$. Every irreducible Riemannian symmetric space is either of compact type, or of non-compact type \cite[Prop. 7.4]{kn2}.

\begin{lemma}\label{eps}
The scalar product $\<\cdot, \cdot\>_\mathfrak{m}$ on $\mathfrak{m}$ can be extended to an $\mathfrak{h}$-invariant scalar product $\<\cdot, \cdot\>_\mathfrak{g}$ on $\mathfrak{g}$, such that $\mathfrak{m}$ and $\mathfrak{h}$ are orthogonal and such that for all $X, Y\in\mathfrak{m}$ and $A\in\mathfrak{h}$ the following identity holds:
\begin{equation}\label{epsilon}
\<[X, Y], A\>_\mathfrak{g}=\varepsilon \<Y, [X, A]\>_\mathfrak{g},
\end{equation}
where $\varepsilon=-1$, if $G/H$ is of compact type, and $\varepsilon=1$, if $G/H$ is of non-compact type. 
\end{lemma}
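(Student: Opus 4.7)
The plan is to leverage the irreducibility of the isotropy representation via Schur's lemma, together with the $\mathrm{ad}$-invariance of the Killing form. First I would note that both $\langle\cdot,\cdot\rangle_\mathfrak{m}$ and the restriction $B_{\mathfrak{g}}|_\mathfrak{m}$ are $\mathfrak{h}$-invariant symmetric bilinear forms on $\mathfrak{m}$: the former by assumption, the latter because $B_{\mathfrak{g}}$ is $\mathrm{ad}$-invariant on $\mathfrak{g}$ and $[\mathfrak{h},\mathfrak{m}]\subseteq\mathfrak{m}$. Since $G/H$ is irreducible, $\lambda$ acts irreducibly on $\mathfrak{m}$, so Schur's lemma (applied to the $\langle\cdot,\cdot\rangle_\mathfrak{m}$-self-adjoint intertwiner comparing the two forms) yields a nonzero scalar $\mu\in\mathbb{R}$ with
\begin{equation*}
B_{\mathfrak{g}}(X,Y)=\mu\,\langle X,Y\rangle_\mathfrak{m},\qquad \forall X,Y\in\mathfrak{m}.
\end{equation*}
By the definition of compact versus non-compact type, $\mathrm{sign}(\mu)$ is determined by $\varepsilon$.

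Next I would extend the inner product to all of $\mathfrak{g}$ by declaring $\mathfrak{m}\perp\mathfrak{h}$ and defining
\begin{equation*}
\langle A,B\rangle_\mathfrak{h}:=-\frac{\varepsilon}{\mu}\,B_{\mathfrak{g}}(A,B),\qquad \forall A,B\in\mathfrak{h}.
\end{equation*}
The $\mathfrak{h}$-invariance of this form follows directly from that of $B_{\mathfrak{g}}$. Positive definiteness on $\mathfrak{h}$ would be checked using the standard fact that $B_{\mathfrak{g}}|_\mathfrak{h}$ is negative definite, which in turn is a consequence of $\lambda(A)$ being skew-symmetric with respect to $\langle\cdot,\cdot\rangle_\mathfrak{m}$ for every $A\in\mathfrak{h}$ (i.e.\ $H$ compact). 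Combined with the sign of $\mu$ above, the scalar $-\varepsilon/\mu$ turns out to be exactly the sign that flips $B_{\mathfrak{g}}|_\mathfrak{h}$ into a positive definite form.

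Finally, the identity \eqref{epsilon} would drop out immediately from the $\mathrm{ad}$-invariance of $B_{\mathfrak{g}}$: for $X,Y\in\mathfrak{m}$ and $A\in\mathfrak{h}$ one has $[X,Y]\in\mathfrak{h}$ and $[X,A]\in\mathfrak{m}$, so
\begin{equation*}
\langle[X,Y],A\rangle_\mathfrak{g}=-\tfrac{\varepsilon}{\mu}B_{\mathfrak{g}}([X,Y],A)=\tfrac{\varepsilon}{\mu}B_{\mathfrak{g}}(Y,[X,A])=\varepsilon\,\langle Y,[X,A]\rangle_\mathfrak{m}=\varepsilon\,\langle Y,[X,A]\rangle_\mathfrak{g},
\end{equation*}
using $B_{\mathfrak{g}}([X,Y],A)=-B_{\mathfrak{g}}(Y,[X,A])$ and then the scaling relation defining $\mu$. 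The main obstacle is really just the sign bookkeeping needed to confirm positive definiteness on $\mathfrak{h}$; once the sign of $\mu$ is pinned down by the compact/non-compact dichotomy, the rest reduces to a single invocation of Schur's lemma and one application of $\mathrm{ad}$-invariance.
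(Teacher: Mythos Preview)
Your argument is correct and, for the compact case, coincides with the paper's: both rescale $B_{\mathfrak{g}}$ globally via Schur's lemma. For the non-compact case the paper takes a different route: it passes to the compact dual $\mathfrak{g}'=\mathfrak{h}\oplus i\mathfrak{m}\subset\mathfrak{g}^{\mathbb{C}}$, applies the compact-type result there to obtain a scalar product on $\mathfrak{h}$, and then transfers the identity \eqref{epsilon} back by a short computation. Your approach is more uniform, handling both types at once by scaling $B_{\mathfrak{g}}|_{\mathfrak{h}}$ with the single factor $-\varepsilon/\mu$ and reading off \eqref{epsilon} directly from $\mathrm{ad}$-invariance; the paper's duality trick, by contrast, sidesteps the need to argue separately that $B_{\mathfrak{g}}|_{\mathfrak{h}}$ is negative definite in the non-compact case.

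One small point on that last issue: your stated reason (``$\lambda(A)$ skew-symmetric, i.e.\ $H$ compact'') only yields negative \emph{semi}-definiteness of $B_{\mathfrak{g}}|_{\mathfrak{h}}$. To get strict definiteness you need one more observation, for instance that in the irreducible case $\mathfrak{g}$ is semisimple, so $B_{\mathfrak{g}}$ is non-degenerate on $\mathfrak{g}$; combined with $B_{\mathfrak{g}}(\mathfrak{h},\mathfrak{m})=0$ this forces $B_{\mathfrak{g}}|_{\mathfrak{h}}$ to be non-degenerate and hence negative definite. With that patch your proof goes through cleanly.
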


\begin{proof} If  $G/H$ is of compact type, $B_{\mathfrak{g}}$ is negative definite on $\mathfrak{g}$, and  $\mathrm{ad}(\mathfrak{g})$-invariant. By Schur's lemma, there exists a positive constant $\lambda$ such that $B_{\mathfrak{g}}|_{\mathfrak{m}}=-\lambda\<\cdot, \cdot\>_\mathfrak{m}$. Then $\<\cdot, \cdot\>_\mathfrak{g}:=-\frac1\lambda B_{\mathfrak{g}}$ is an $\mathfrak{h}$-invariant scalar product extending $\<\cdot, \cdot\>_\mathfrak{m}$ to $\mathfrak{g}$, making $\mathfrak{m}$ and $\mathfrak{h}$ orthogonal, and satisfying~\eqref{epsilon} for $\varepsilon=-1$ thanks to the $\mathrm{ad}(\mathfrak{g})$-invariance of $B_{\mathfrak{g}}$.

If $G/H$ is of non-compact type, then the real subspace $\mathfrak{g}':=\mathfrak{h}\oplus i\mathfrak{m}$ of $\mathfrak{g}^\mathbb{C}$ is a Lie subalgebra of compact type, and the above splitting makes $(\mathfrak{g}',\mathfrak{h})$ a symmetric pair of compact type. By the first part of the proof, the $\mathfrak{h}$-invariant scalar product on $i\mathfrak{m}$ defined by $$\<iX,iY\>_{i\mathfrak{m}}:=\<X,Y\>_\mathfrak{m}$$ extends to an $\mathfrak{h}$-invariant scalar product $\<\cdot, \cdot\>_{\mathfrak{g}'}$ on $\mathfrak{g}'$, making $i\mathfrak{m}$ and $\mathfrak{h}$ orthogonal, and satisfying \eqref{epsilon} for $\varepsilon=-1$. If $\<\cdot, \cdot\>_{\mathfrak{h}}$ denotes the restriction of $\<\cdot, \cdot\>_{\mathfrak{g}'}$ to $\mathfrak{h}$, then $\<\cdot, \cdot\>_{\mathfrak{g}}:=\<\cdot, \cdot\>_{\mathfrak{h}}+\<\cdot, \cdot\>_{\mathfrak{m}}$ is an $\mathfrak{h}$-invariant scalar product extending $\<\cdot, \cdot\>_\mathfrak{m}$ to $\mathfrak{g}$, making $\mathfrak{m}$ and $\mathfrak{h}$ orthogonal, and for every  $X, Y\in\mathfrak{m}$ and $A\in\mathfrak{h}$ we have 
\begin{eqnarray*}\<[X, Y], A\>_\mathfrak{g}&=&\<[X, Y], A\>_\mathfrak{h}=\<[X, Y], A\>_{\mathfrak{g}'}=-\<[iX, iY], A\>_{\mathfrak{g}'}=\< iY,[iX, A]\>_{\mathfrak{g}'}\\
&=&\< iY,i[X, A]\>_{i\mathfrak{m}}=\< Y,[X, A]\>_{\mathfrak{m}}=\< Y,[X, A]\>_{\mathfrak{g}}.
\end{eqnarray*}
\end{proof}

Recall that the canonical $3$-form of a Lie algebra $\mathfrak{h}$ of compact type is defined as follows: 
\begin{equation}\label{defcanform}
\omega(X,Y,Z):=B_\mathfrak{h}([X, Y], Z), \quad \forall X, Y, Z\in\mathfrak{h},
\end{equation} 
where $B_\mathfrak{h}$ denotes the Killing form of $\mathfrak{h}$. Since $B_\mathfrak{h}$ is $\mathrm{ad}(\mathfrak{h})$-invariant, the Jacobi identity shows that $\omega$ is $\mathrm{ad}(\mathfrak{h})$-invariant and satisfies $(\omega_X)_*\omega=0$, for all $X\in\mathfrak{h}$.

\begin{example}\label{exsym}
Let $G/H$ be either an irreducible simply connected symmetric space of type II,  \emph{i.e.} $G:=H\times H$, $H$ is embedded diagonally in $G$  and $\mathfrak{g}=\Delta^+\oplus\Delta^{-}$, with $\mathfrak{h}\simeq\Delta^+:=\{(X,X)\, |\, X\in \mathfrak{h}\}$ and $\mathfrak{m}=\Delta^-:=\{(X,-X)\, |\, X\in \mathfrak{h}\}$, or an irreducible simply connected symmetric space of type IV, \emph{i.e.} $G:=H^{\mathbb{C}}$ and $\mathfrak{g}=\mathfrak{h}\oplus i\mathfrak{h}$. 

In both cases, there exists an $\mathfrak{h}$-invariant isomorphism, $\psi\colon \mathfrak{h}\to \mathfrak{m}$, defined in the type II case by $\psi(X):=(X, -X)$ and in the type IV case by $\psi(X):=iX$. The pull-back of the canonical form $\omega$ of $\mathfrak{h}$ through $\psi^{-1}$ thus defines an $\mathfrak{h}$-invariant $3$-form $\tau$ on $\mathfrak{m}$, which also satisfies  $(\tau_X)_*\tau=0$, for all $X\in\mathfrak{m}$. Hence, in both cases, $\tau$ defines a parallel $3$-form on $G/H$, also denoted by $\tau$, such that $(\tau_X)_*\tau=0$, for all tangent vectors $X\in\Gamma(G/H)$.
\end{example}

Conversely, we show:

\begin{theorem}\label{symmspaces} 
		Let $G/H$ be an irreducible Riemannian symmetric space carrying a parallel non-zero $3$-form $\tau$ which satisfies $(\tau_X)_*\tau=0$, for all tangent vectors $X$. Then $G/H$ is an  irreducible symmetric space of type II or IV and $\tau$ is, up to a constant multiple, equal to the above constructed $3$-form.
\end{theorem}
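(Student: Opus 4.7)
The plan is to use the algebraic condition $(\tau_X)_*\tau=0$ to produce a compact Lie algebra structure on $\mathfrak{m}$, and then combine $\mathfrak{h}$-equivariance with irreducibility to force $(\mathfrak{g},\mathfrak{h})$ to be of type II or IV.

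Since $G/H$ is simply connected and $\tau$ is parallel, $\tau$ is $G$-invariant and is thus determined by an $\mathfrak{h}$-invariant $3$-form on $\mathfrak{m}$, still denoted $\tau$. As noted in the introduction, $(\tau_X)_*\tau=0$ is exactly the Jacobi identity for $[X,Y]_\tau:=\tau_XY$ on $\mathfrak{m}$, and the total skew-symmetry of $\tau$ with respect to $\langle\cdot,\cdot\rangle_\mathfrak{m}$ makes this bracket $\langle\cdot,\cdot\rangle_\mathfrak{m}$-skew; so $(\mathfrak{m},[\cdot,\cdot]_\tau,\langle\cdot,\cdot\rangle_\mathfrak{m})$ is a Lie algebra of compact type. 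The $\mathfrak{h}$-invariance of $\tau$ and of the scalar product forces $\mathfrak{h}$ to act on $\mathfrak{m}$ by derivations of $[\cdot,\cdot]_\tau$. The center $\mathfrak{z}(\mathfrak{m})$ and the commutator $[\mathfrak{m},\mathfrak{m}]_\tau$ are then $\mathfrak{h}$-invariant ideals, so irreducibility of the isotropy together with $\tau\ne 0$ forces $\mathfrak{m}$ to be semisimple.

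Because every derivation of a semisimple Lie algebra is inner, there is a unique map $\phi\colon\mathfrak{h}\to\mathfrak{m}$ characterized by $A\cdot X=[\phi(A),X]_\tau$, and one checks directly that $\phi$ is both a Lie algebra homomorphism and $\mathfrak{h}$-equivariant. Its image is an $\mathfrak{h}$-submodule, so by irreducibility $\phi$ is surjective, and the almost-effectiveness of the presentation makes it an isomorphism. In particular $\mathfrak{h}$ is irreducible under its own adjoint representation, hence simple. Schur's lemma applied after complexification gives $\dim\mathrm{Hom}_{\mathfrak{h}}(\Lambda^2\mathfrak{h},\mathfrak{h})=1$, so
\[
[\phi(A),\phi(B)]_\mathfrak{g}=\lambda[A,B]_\mathfrak{h}
\]
for some $\lambda\in\mathbb{R}^*$ (nonzero by non-flatness). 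Substituting this into Lemma \ref{eps}, and using that the scalar product pulled back by $\phi$ from $\mathfrak{m}$ to $\mathfrak{h}$ is $\mathrm{ad}$-invariant (a multiple of $-B_\mathfrak{h}$ since $\mathfrak{h}$ is simple), yields $\lambda=-\varepsilon$; hence $\lambda=1$ in the compact case and $\lambda=-1$ in the non-compact case.

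In the compact case the subspaces $\mathfrak{h}_\pm:=\{A\pm\phi(A)\mid A\in\mathfrak{h}\}$ are commuting Lie ideals of $\mathfrak{g}$, each isomorphic to $\mathfrak{h}$, whose direct sum is $\mathfrak{g}$, with $\mathfrak{h}$ embedded as the diagonal and $\mathfrak{m}$ as the anti-diagonal; this is the type II presentation. In the non-compact case the map $J\colon\mathfrak{g}\to\mathfrak{g}$ defined by $J|_\mathfrak{h}:=\phi$ and $J|_\mathfrak{m}:=-\phi^{-1}$ satisfies $J^2=-\mathrm{Id}$, and a check on the three cases $(\mathfrak{h},\mathfrak{h})$, $(\mathfrak{h},\mathfrak{m})$, $(\mathfrak{m},\mathfrak{m})$ shows $[JX,Y]_\mathfrak{g}=J[X,Y]_\mathfrak{g}$; thus $(\mathfrak{g},J)$ is a complex Lie algebra with real form $\mathfrak{h}$, i.e.\ $(\mathfrak{g},J)\simeq\mathfrak{h}^\mathbb{C}$, which is type IV. In both cases $\phi$ plays the role of the isomorphism $\psi$ from Example \ref{exsym}, so $\tau$ coincides, up to scalar, with the $3$-form constructed there. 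The main obstacle is the sign/normalization step leading to $\lambda=-\varepsilon$: one has to distinguish $[\cdot,\cdot]_\mathfrak{g}$ from $[\cdot,\cdot]_\tau$ throughout and keep careful track of which scalar product lives on which space, after which the type II / type IV dichotomy reduces to the linear-algebra constructions above.
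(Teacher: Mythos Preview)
Your argument is correct and follows essentially the same strategy as the paper: build an $\mathfrak{h}$-equivariant isomorphism $\phi:\mathfrak{h}\to\mathfrak{m}$, establish $[\phi(A),B]_\mathfrak{g}=\phi([A,B])$ and $[\phi(A),\phi(B)]_\mathfrak{g}=-\varepsilon[A,B]$, and read off the type II/IV structure. Your $\phi$ is exactly the paper's $\varphi=\tau^{-1}\circ\lambda$ (since $\lambda(A)=\tau_{\phi(A)}$ is your defining relation), and your constructions $\mathfrak{h}_\pm$ and $J$ are repackagings of the paper's explicit isomorphisms $\Psi_-$ and $\Psi_+$. One genuine difference: to produce $\phi$, the paper invokes the separate Lemma~\ref{alglemma} to prove $\lambda(\mathfrak{h})\subseteq\tau(\mathfrak{m})$ and then inverts $\tau$, whereas you observe that $\mathfrak{h}$ acts by derivations of the semisimple Lie algebra $(\mathfrak{m},[\cdot,\cdot]_\tau)$ and invoke ``derivations of a semisimple algebra are inner''. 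This is a clean alternative and avoids the need for Lemma~\ref{alglemma} at this point.

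The one weak spot is your appeal to ``Schur's lemma after complexification'' for $\dim\mathrm{Hom}_\mathfrak{h}(\Lambda^2\mathfrak{h},\mathfrak{h})=1$. Schur only tells you this dimension equals the multiplicity of the adjoint in $\Lambda^2(\mathrm{adj})$; that this multiplicity is $1$ is a separate (true, but not one-line) fact. More to the point, the detour is unnecessary: once you have the equivariance $[\phi(A),C]_\mathfrak{g}=\phi([A,C])$ and normalize $\phi$ to be an isometry, Lemma~\ref{eps} gives directly
\[
\langle[\phi(A),\phi(B)]_\mathfrak{g},C\rangle_\mathfrak{g}
=\varepsilon\langle\phi(B),[\phi(A),C]_\mathfrak{g}\rangle_\mathfrak{g}
=\varepsilon\langle\phi(B),\phi([A,C])\rangle_\mathfrak{m}
=\varepsilon\langle B,[A,C]\rangle_\mathfrak{h}
=-\varepsilon\langle[A,B],C\rangle_\mathfrak{h},
\]
which is exactly how the paper obtains $[\phi(A),\phi(B)]_\mathfrak{g}=-\varepsilon[A,B]$ without any multiplicity statement.
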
		

\begin{proof}
	Let $\mathfrak{g}=\mathfrak{h}\oplus\mathfrak{m}$ be the decomposition of the Lie algebra of $G$ satisfying \eqref{mh}, and let $\<\cdot, \cdot\>_\mathfrak{m}$ denote the $\mathfrak{h}$-invariant scalar product on $\mathfrak{m}$ induced by the Riemannian metric on $G/H$. We extend it to an $\mathfrak{h}$-invariant scalar product $\<\cdot, \cdot\>_\mathfrak{g}$ on $\mathfrak{g}$ satisfying \eqref{epsilon} by Lemma \ref{eps}. We identify vectors and covectors on $\mathfrak{g}$ using this scalar product. 
	
	A parallel $3$-form $\tau$ on the symmetric space $G/H$ is determined by an $\mathfrak{h}$-invariant  $3$-form in $ \Lambda^3 \mathfrak{m}$, which we further denote by $\tau$. Note that the $\mathfrak{h}$-invariance of $\tau$, viewed as a linear map $\tau\colon\Lambda^1\mathfrak{m}\to\Lambda^2\mathfrak{m}$, reads
	\begin{equation}\label{inv}
	[\lambda(A),\tau(X)]=\tau([A,X]),\qquad\forall \ A\in\mathfrak{h},\ X\in\mathfrak{m},
	\end{equation}
	where the first bracket is the commutator in $\mathrm{so}(\mathfrak{m})\simeq\Lambda^2\mathfrak{m}$.
	
		Applying Lemma~\ref{alglemma} (proved in the Appendix) to the $\mathfrak{h}$-representation $V:=\mathfrak{m}$, we obtain the inclusion $\lambda(\mathfrak{h})\subseteq\tau(\mathfrak{m})\subseteq\Lambda^2\mathfrak{m}$. The fact that $G/H$ is irreducible and $\tau$ is parallel imply that $\tau_X\ne 0$, for all $X\ne 0$. Thus $\tau\colon\Lambda^1\mathfrak{m}\to\Lambda^2\mathfrak{m}$ is injective, so one can define an injective $\mathfrak{h}$-invariant map  $\varphi\colon\mathfrak{h}\to \mathfrak{m}, \, \varphi:=\tau^{-1}\circ\lambda$, where $\tau^{-1}\colon \tau(\mathfrak{m})\to \mathfrak{m}$. Since $\mathfrak{m}$ is an irreducible $\mathfrak{h}$ representation, $\varphi$ is bijective, so in particular $\mathfrak{h}$ is simple.
		
		 By Schur's Lemma, and the $\mathfrak{h}$-invariance of $\varphi$, the pull-back of $\<\cdot, \cdot\>_\mathfrak{m}$ through $\varphi$ is a constant multiple of the restriction to $\mathfrak{h}$ of $\<\cdot, \cdot\>_\mathfrak{g}$. Hence, up to rescaling $\tau$, we may assume that $\varphi$ is an isometry. 
		 
		We claim that $\varphi$ fulfills the following identities for all $A,B\in\mathfrak{h}$:
		\begin{equation}\label{id1}
		[\varphi(A), B]=\varphi([A,B])=	[A, \varphi(B)],
		\end{equation}
		\begin{equation}\label{ident}
		[\varphi(A), \varphi(B)]=-\varepsilon[A,B],
		\end{equation}
		where 
		$\varepsilon=-1$, if $G/H$ is of compact type, and $\varepsilon=1$, if $G/H$ is of non-compact type. 
		
		Using the $\mathfrak{h}$-invariance of $\tau$ given by \eqref{inv} we compute:
\begin{eqnarray*}\tau(\varphi([A,B]))&=&\lambda([A,B])=[\lambda(A),\lambda(B)]=[\tau(\varphi(A)),\lambda(B)]=-[\lambda(B),\tau(\varphi(A))]\\&=&-\tau([B,\varphi(A)])=\tau([\varphi(A),B]),
\end{eqnarray*}
so the injectivity of $\tau$ yields \eqref{id1}.
		The identity \eqref{ident} is a consequence of  \eqref{epsilon} and \eqref{id1} together with the following computation, which holds for all $A,B, C\in\mathfrak{h}$:
		\begin{eqnarray*}
\< [\varphi(A),\varphi(B)], C\>_\mathfrak{g}&=&\varepsilon	\<\varphi(B), [\varphi(A), C]\>_\mathfrak{g}=\varepsilon	\<\varphi(B), \varphi([A, C])\>_\mathfrak{g}=\varepsilon	\<B, [A, C]\>_\mathfrak{g}\\&=&-\varepsilon\<[A,B], C\>_\mathfrak{g}.
		\end{eqnarray*}
		
		Let us now define the following maps. If $\varepsilon=-1$, then
		\[\Psi_{-}\colon \mathfrak{h}\oplus \mathfrak{h}\to \mathfrak{h}\oplus \mathfrak{m} , \quad \Psi_-(A, B):=\frac{1}{2}\left(A+B+\varphi(A-B)\right),\]
		and if $\varepsilon=1$, then
			\[\Psi_+\colon \mathfrak{h}^{\mathbb{C}}\to \mathfrak{h}\oplus \mathfrak{m} , \quad \Psi_+(A+iB):=A+\varphi(B).\]
			
We claim that $\Psi_-$ and $\Psi_+$  are isomorphisms of Lie algebras. Using \eqref{id1} and \eqref{ident} we compute for all $A_1, A_2, B_1, B_2\in\mathfrak{h}$:
		\begin{equation*}
		\begin{split}
		[\Psi_-(A_1, B_1), \Psi_-(A_2,B_2)]&=\frac{1}{4}[A_1+B_1+\varphi(A_1-B_1),A_2+B_2+\varphi(A_2-B_2)]\\
		&=\frac{1}{4}([A_1+B_1,A_2+B_2]+[\varphi(A_1-B_1),A_2+B_2]\\
		&\quad+[A_1+B_1,\varphi(A_2-B_2)]+[\varphi(A_1-B_1),\varphi(A_2-B_2)])\\
		&\overset{\eqref{id1}, \eqref{ident}}{=}\frac{1}{4}([A_1+B_1,A_2+B_2]+\varphi([A_1-B_1,A_2+B_2])\\
		&\quad+\varphi([A_1+B_1,A_2-B_2])+[A_1-B_1,A_2-B_2])\\
		&=\frac{1}{2}\left([A_1, A_2]+[B_1,B_2]+\varphi([A_1,A_2]-[B_1,B_2])\right)\\&=\Psi_-([A_1, A_2], [B_1,B_2])=\Psi_-([(A_1, B_1), (A_2,B_2)])
		\end{split}
		\end{equation*}
	and similarly
		\begin{equation*}
	\begin{split}
	[\Psi_+(A_1+i B_1), \Psi_+(A_2+iB_2)]&=[A_1+\varphi(B_1), A_2+\varphi(B_2)]=\\
	&=[A_1, A_2]+[\varphi(B_1), A_2]+[A_1, \varphi(B_2)]+[\varphi(B_1), \varphi(B_2)]\\
	&\overset{\eqref{id1},\eqref{ident}}{=}[A_1, A_2]+\varphi([B_1, A_2])+\varphi([A_1, B_2])-[B_1, B_2]\\
	&=\Psi_+([A_1, A_2]-[B_1, B_2]+i([A_1, B_2]+[B_1, A_2]))\\
	&=\Psi_+([A_1+i B_1, A_2+iB_2]).
		\end{split}
	\end{equation*}
Hence, if $G/H$ is of compact type,  then it is isometric to the irreducible type II  symmetric space $H\times H/H$ and if $G/H$  is of non-compact type, then it is isometric to the irreducible type IV symmetric space $H^{\mathbb{C}}/H$. 
\end{proof}

\section{The classification}

We now consider a manifold $(N,g_N, \tau)$ satisfying the conclusion of Theorem \ref{thred}. In order to keep notation as simple as possible, we denote by $g:=g_N$. Thus $(N,g)$ is a complete simply connected Riemannian manifold endowed with a metric connection 
$$\nabla=\nabla^g+\tau$$
with skew-symmetric torsion $\tau\in\Omega^3(\mathrm{T}N)$ satisfying 
\begin{equation}\label{paralleltorsion}
 \nabla^g\tau=0
\end{equation} 
and such that for all vectors $X\in \mathrm{T}N$:
\begin{equation}\label{tauXtau}
(\tau_X)_*\tau=0.
\end{equation} 

Since $\tau$ is $\nabla^g$-parallel, its kernel $\mathrm{Ker}(\tau_x):=\{X\in \mathrm{T}_x N\, \,|\, \tau_{X}=0\}\subset \mathrm{T}_xN$ 
defines a $\nabla^g$-parallel distribution on $N$. Hence, the manifold $(N,g)$ splits as a product $(N', g')\times (N'', g'')$, where $\tau$ acts trivially on $N'$ and its restriction to $N''$ still satisfies \eqref{paralleltorsion} and~\eqref{tauXtau} and has, moreover,  trivial kernel. 
In order to keep notation simple, we further denote $(N'',g'', \tau|_{N''})$ by $(N,g,\tau)$.
Let us denote by $\mathfrak{g}:=\mathrm{T}_xN$, for some fixed point $x\in N$.  The condition \eqref{tauXtau} implies that for all $X,Y\in \mathfrak{g}$ the following equality holds:
\begin{equation}\label{tauXtau1} 
[\tau_X, \tau_Y]=\tau_{\tau_X Y}.
\end{equation}

The bracket on $\mathfrak{g}$ defined by $[X,Y]:=\tau_X Y$, for all $X, Y\in\mathfrak{g}$ satisfies the Jacobi identity thanks to \eqref{tauXtau1}, and tautologically
$\tau$ becomes a morphism of Lie algebras from $\mathfrak{g}$ to $\Lambda^2 \mathrm{T}_x N$. Because $\tau$ is totally skew-symmetric, the metric $g$ on $\mathrm{T}_x N$ is $\mathrm{ad}(\mathfrak{g})$-invariant, so the Lie algebra $\mathfrak{g}$ is of compact type. Moreover, as $\tau$ has no kernel, $\mathfrak{g}$ is semisimple.

Consider the $g$-orthogonal splitting of $\mathfrak{g}=\mathrm{T}_x N$ into  simple Lie algebras: $\displaystyle\mathfrak{g}=\overset{\ell}{\underset{i=1}{\oplus}}\mathfrak{g}_i$.
If $\mathfrak{hol}$ denotes the holonomy algebra of the Levi-Civita connection $\nabla^g$ at the fixed point $x\in N$, then the following result holds:

\begin{lemma}\label{splitsimplealg} Each summand $\mathfrak{g}_i$ is $\mathfrak{hol}$-invariant.
\end{lemma}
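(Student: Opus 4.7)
The plan is to exploit the fact that $\tau$ is $\nabla^g$-parallel to force the holonomy algebra $\mathfrak{hol}$ to act on $\mathfrak{g}=\mathrm{T}_xN$ by derivations of the bracket $[X,Y]:=\tau_XY$. Once this is in hand, the claim reduces to the standard algebraic fact that derivations of a semisimple Lie algebra preserve each of its simple ideals.

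First I would show that the holonomy group $\mathrm{Hol}$ of $\nabla^g$ at $x$, viewed as a subgroup of $\mathrm{O}(\mathfrak{g})$, acts by Lie algebra automorphisms of $(\mathfrak{g},[\cdot,\cdot])$. Since $\nabla^g\tau=0$, every $G\in\mathrm{Hol}$ preserves $\tau$, namely $\tau(GX,GY,GZ)=\tau(X,Y,Z)$ for all $X,Y,Z\in\mathfrak{g}$. Combined with the fact that $G$ is an isometry and with the identification $\tau(X,Y,Z)=g(\tau_XY,Z)$, this is equivalent to $G(\tau_XY)=\tau_{GX}(GY)$, that is, $G[X,Y]=[GX,GY]$. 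Passing to the restricted (hence connected) holonomy group and differentiating along a one-parameter subgroup tangent to $A\in\mathfrak{hol}$ yields
\[
A[X,Y]=[AX,Y]+[X,AY],\qquad\forall X,Y\in\mathfrak{g},
\]
so each $A\in\mathfrak{hol}$ is a derivation of $(\mathfrak{g},[\cdot,\cdot])$.

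To conclude, I would use the fact (established in the paragraph preceding the statement of the lemma) that $\mathfrak{g}$ is semisimple, and hence that each simple ideal $\mathfrak{g}_i$ is perfect: $\mathfrak{g}_i=[\mathfrak{g}_i,\mathfrak{g}_i]$. Given $A\in\mathfrak{hol}$ and $Y,Z\in\mathfrak{g}_i$, decompose $AY=\sum_j(AY)_j$ according to $\mathfrak{g}=\oplus_j\mathfrak{g}_j$. The derivation property gives $A[Y,Z]=[AY,Z]+[Y,AZ]$, and the vanishing $[\mathfrak{g}_j,\mathfrak{g}_i]=0$ for $j\ne i$ reduces each term on the right to an element of $\mathfrak{g}_i$. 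Hence $A$ sends every bracket of elements of $\mathfrak{g}_i$ into $\mathfrak{g}_i$; since $\mathfrak{g}_i=[\mathfrak{g}_i,\mathfrak{g}_i]$, this yields $A(\mathfrak{g}_i)\subseteq\mathfrak{g}_i$.

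There is no substantive obstacle here; the only point deserving explicit care is the translation between the statement \emph{$G$ preserves the 3-form $\tau$} and \emph{$G$ preserves the bracket $[X,Y]=\tau_XY$}, which is however immediate from $\tau(X,Y,Z)=g(\tau_XY,Z)$ together with the $G$-invariance of $g$.
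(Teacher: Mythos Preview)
Your proof is correct and takes essentially the same approach as the paper: both deduce from $\nabla^g\tau=0$ that every $A\in\mathfrak{hol}$ acts as a derivation of the bracket $[X,Y]=\tau_XY$ (the paper writes this as $A_*\tau_X=\tau_{AX}$), and then conclude by elementary structure theory of the semisimple Lie algebra $\mathfrak{g}$. The only cosmetic difference is in the final step: you use perfectness $\mathfrak{g}_i=[\mathfrak{g}_i,\mathfrak{g}_i]$, whereas the paper shows $[AX,\mathfrak{g}_i^\perp]=0$ for $X\in\mathfrak{g}_i$ and identifies $\mathfrak{g}_i$ with the centralizer of $\mathfrak{g}_i^\perp$.
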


\begin{proof}
Let $A\in \mathfrak{hol}$ and $X\in\mathfrak{g}_i$, for some $i\in\{1, \dots, \ell\}$. We need to show that $AX\in\mathfrak{g}_i$. Since $\tau$ is parallel, $\mathfrak{hol}$ acts trivially on $\tau$, so $A_*\tau_X=\tau_{AX}$. For any $j\in\{1, \dots, \ell\}\setminus\{i\}$ and $Y\in\mathfrak{g}_j$, we have $\tau(X,Y)=0$, so we obtain:
\begin{equation}\label{comm}
[AX, Y]=\tau_{AX}Y=A\tau_X Y-\tau_X AY=-\tau_X AY=-[X, AY].
\end{equation}
The left hand side of \eqref{comm}, namely $[AX, Y]$, belongs to $\mathfrak{g}_j$, whereas its right hand side belongs to $\mathfrak{g}_i$, showing that both sides have to vanish. Thus $[AX, Y]=0$, for all $Y\in\mathfrak{g}_i^\perp$, so $AX$ belongs to the commutator of $\mathfrak{g}_i^\perp$ in $\mathfrak{g}$, which coincides with the simple Lie algebra $\mathfrak{g}_i$. Hence, $AX$ belongs to $\mathfrak{g}_i$.
\end{proof}

Lemma~\ref{splitsimplealg}  implies that the tangent bundle of $N$ decomposes as a  sum of $\nabla^g$-parallel distributions, defined by the parallel transport of $\mathfrak{g}_i$, for $i\in\{1, \dots, \ell\}$.  Thus $(N,g)$ splits as a product $\displaystyle\prod_{i=1}^{\ell}(N_i, g_i)$ and $\tau=\overset{\ell}{\underset{i=1}{\sum}}\tau_i$, where each $\tau_i\in\Lambda^3(\mathrm{T}N_i)$ has trivial kernel, satisfies \eqref{paralleltorsion} and~\eqref{tauXtau}, and $\mathfrak{g_i}=\tau_i(\mathrm{T}_xN_i)$ is a simple Lie algebra.
Let us fix an $i\in\{1,\dots, \ell\}$ and again simplify the notation and  denote $(N_i,g_i,\tau_i)$ by $(N,g,\tau)$.

We are ready for the main step in the classification:

\begin{theorem} \label{thdec}
	Let $(N,g)$ be a complete simply connected Riemannian mani\-fold carrying a metric connection with parallel skew-symmetric torsion $\tau$ which satisfies $(\tau_X)_*\tau=0$, for all $X\in\Gamma(\mathrm{T}N)$,  $\mathrm{ker}(\tau)=0$ and $\mathfrak{g}:=\mathrm{T}_xN$ is a simple Lie algebra, for some $x\in N$. Then one of the following cases holds:
	\begin{enumerate}
		\item $(N,g)$ is an oriented $3$-dimensional Riemannian manifold and $\tau$ is a constant multiple of its Riemannian volume form $\mathrm{vol}_g$.
		\item $(N,g)$ is a simple Lie algebra with an $\mathrm{ad}$-invariant metric $g$ and $\tau$ is a constant multiple of its canonical $3$-form defined in \eqref{defcanform}.
		\item $(N,g)$ is an irreducible symmetric space of type II or of type IV and $\tau$ is a constant multiple of the $3$-form constructed in Example \ref{exsym}.
	\end{enumerate}	

\end{theorem}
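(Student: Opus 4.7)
The plan is to handle the three-dimensional case directly and, for higher dimensions, apply Berger's holonomy theorem combined with Theorem~\ref{symmspaces}, splitting along whether the $\nabla$-curvature vanishes.

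If $\dim N = 3$, then $\mathfrak{g} = \mathrm{T}_xN$ is a three-dimensional simple compact Lie algebra, hence $\mathfrak{g} \cong \mathfrak{su}(2)$. Since the space of $3$-forms on $\mathrm{T}_xN$ is one-dimensional, $\tau$ is a constant (by parallelism and connectedness of $N$) multiple of $\mathrm{vol}_g$, yielding case (1).

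For $\dim N \geq 4$, hence $\dim N \geq 8$ (no simple compact Lie algebra has dimension in $\{4,\ldots,7\}$), the first step is to show $\mathfrak{hol}^g \subseteq \mathrm{ad}(\mathfrak{g})$. By \eqref{paralleltorsion} the holonomy $\mathfrak{hol}^g$ stabilizes $\tau$ and therefore acts by derivations of the simple Lie algebra $(\mathfrak{g},\tau)$; Whitehead's lemma then gives $\mathrm{Der}(\mathfrak{g}) = \mathrm{ad}(\mathfrak{g})$, so $\mathfrak{hol}^g = \mathrm{ad}(\mathfrak{k})$ for some Lie subalgebra $\mathfrak{k} \subseteq \mathfrak{g}$. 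A short computation using $\nabla=\nabla^g+\tau$, \eqref{paralleltorsion}, and \eqref{tauXtau1} shows $R^\nabla(X,Y)= R^g(X,Y) + \mathrm{ad}([X,Y]_\mathfrak{g})$, which also lands in $\mathrm{ad}(\mathfrak{g})$. If $R^\nabla\equiv 0$, then $\nabla$ is a flat metric connection with nonzero parallel skew torsion on a complete simply connected Riemannian manifold; Cartan--Schouten's theorem identifies $(N,g)$ with a compact simple Lie group carrying a bi-invariant metric, and $\tau$ with a nonzero constant multiple of its canonical $3$-form, yielding case (2).

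If instead $R^\nabla \not\equiv 0$, I will argue that $\mathfrak{hol}^g$ acts irreducibly on $\mathrm{T}_xN$: any proper $\mathfrak{hol}^g$-invariant decomposition $\mathfrak{g}=\mathfrak{k}\oplus\mathfrak{k}^\perp$ would, via de Rham, split $N$ as a Riemannian product whose parallel $3$-form $\tau$ must decompose compatibly; an analysis of the cross-components of $\tau$ combined with Ambrose--Singer forces $[\mathfrak{k},\mathfrak{k}]$ to be an ideal of $\mathfrak{g}$, and simplicity then yields $\mathfrak{k}=\mathfrak{g}$. Once irreducibility is at hand, Berger's theorem concludes the argument: $\mathrm{ad}(\mathfrak{g})$ for a simple compact Lie algebra of dimension $\geq 8$ is strictly contained in $\mathfrak{so}(\mathfrak{g})$ and, being of real type, does not appear on Berger's list of non-symmetric irreducible holonomies. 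Hence $(N,g)$ is locally symmetric, and completeness plus simple-connectedness give an irreducible Riemannian symmetric space. Theorem~\ref{symmspaces} then identifies $(N,g,\tau)$ as type II or IV with $\tau$ proportional to the $3$-form of Example~\ref{exsym}, yielding case (3). The main obstacle is this irreducibility step, which requires a careful analysis of how parallel $3$-forms interact with the de Rham decomposition of $N$ when the induced Lie algebra structure on the tangent space is simple.
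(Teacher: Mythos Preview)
Your opening observation—that $\mathfrak{hol}^g$ acts by derivations on the simple Lie algebra $(\mathfrak{g},\tau)$, whence $\mathfrak{hol}^g\subseteq\mathrm{ad}(\mathfrak{g})$ by Whitehead's lemma—is a clean alternative to the paper's Lemma~\ref{alglemma}. However, the dichotomy on $R^\nabla$ does not line up with the cases of the theorem, and this produces a genuine gap.

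Take $(N,g)$ to be the underlying Euclidean space of $\mathfrak{g}$ with the flat $\mathrm{ad}$-invariant metric and $\tau$ the constant canonical $3$-form; this is precisely what the paper means by case~(2) (in the proof it arises as $N=N_0$, the flat de Rham factor). Your own formula gives $R^\nabla(X,Y)=R^g(X,Y)+\mathrm{ad}([X,Y]_{\mathfrak g})=\mathrm{ad}([X,Y]_{\mathfrak g})\neq 0$, so this example lands in your ``$R^\nabla\not\equiv 0$'' branch. But here $\mathfrak{hol}^g=0$, so the holonomy representation is as reducible as possible, and your irreducibility argument cannot even get started: every subspace is $\mathfrak{hol}^g$-invariant, and there is no Ambrose--Singer information to exploit. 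Conversely, the Cartan--Schouten situation $R^\nabla\equiv 0$ yields a compact simple Lie group with bi-invariant metric, which is a type~II symmetric space—case~(3), not case~(2). So your two branches together never produce the flat example.

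Even away from the flat case, the irreducibility sketch is too thin. Saying that ``an analysis of the cross-components of $\tau$ combined with Ambrose--Singer forces $[\mathfrak{k},\mathfrak{k}]$ to be an ideal of $\mathfrak{g}$'' is not a proof, and the conclusion ``simplicity then yields $\mathfrak{k}=\mathfrak{g}$'' does not follow: simplicity only gives $[\mathfrak{k},\mathfrak{k}]\in\{0,\mathfrak{g}\}$. The paper's argument at this point is substantially heavier: it uses Lemma~\ref{lemmalg} to kill the mixed components $\tau(D_\alpha,D_\beta,D_\gamma)$, and then splits into a detailed analysis according to whether the flat de Rham factor $D_0$ is trivial or not, the latter involving the representations $\rho_\alpha\colon\mathfrak{g}_0\to\Lambda^2 D_\alpha$ and the vanishing of $\sum_i\tau_{e_i}\wedge\tau_{e_i}$. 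That flat-factor analysis is exactly the piece your outline has no substitute for.
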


\begin{proof}
Let us consider the de Rham decomposition $N=N_0\times N_1\times \cdots\times N_k$, where $N_0$ denotes the flat factor and each $N_{\alpha}$ is de Rham irreducible, for $\alpha\in\{1, \dots, k\}$. We may assume that $k\geq 1$, since otherwise $N=N_0$ is a simple Lie algebra, $g$ is an $\mathrm{ad}$-invariant metric and $\tau$ is proportional to its canonical $3$-form, which is case~$(2)$. 

We denote by $\mathfrak{hol}$ the holonomy algebra at the fixed point $x$ and by $D_\alpha:=\mathrm{T}_x N_{\alpha}$ the $\mathfrak{hol}$-invariant subspaces of $\mathrm{T}_x N$, for all $\alpha\in\{0, 1, \dots, k\}$.

Let $\alpha\in\{1, \dots, k\}$ be fixed. We denote by $V_1:=D_\alpha$, $V_2:=D_\alpha^{\perp}$, $\mathfrak{h}:=\mathfrak{hol}$, and by $\rho_i:\mathfrak{h}\to \mathfrak{so}(V_i)$ the restrictions of the holonomy representation to $V_i$ for $i=1,2$. The de Rham decomposition theorem gives the existence of an element $A\in\mathfrak{h}$ acting non-trivially on $V_1$ and trivially on $V_2$. From 
Lemma~\ref{lemmalg} (proved below in the Appendix), we obtain that  $\tau(D_\alpha, D_\alpha^{\perp}, D_\alpha^{\perp})=0$, whence
\begin{equation}\label{et}\tau(D_\alpha, D_\beta, D_\gamma)=0,\qquad\forall\ \alpha\in\{1, \dots, k\},\ \forall\ \beta,\gamma\in \{0, \dots, k\}\setminus\{\alpha\}.\end{equation}
%

From \eqref{et},  we immediately obtain:
\begin{equation}\label{eqtau}\tau \in\Lambda^3 D_0\oplus \overset{k}{\underset{\alpha=1}{\oplus}}(\Lambda^1 D_0\otimes\Lambda^2 D_\alpha)\oplus (\overset{k}{\underset{\alpha=1}{\oplus}}\Lambda^3 D_\alpha).\end{equation}
We consider the following two cases:

{\bf 1. $D_0$ is trivial.} If $k\geq 2$, then $\displaystyle\tau\in\overset{k}{\underset{\alpha=1}{\oplus}} \Lambda^3 D_\alpha$, which implies that each $D_\alpha$ is a Lie subalgebra of $\mathfrak{g}=\overset{k}{\underset{\alpha=1}{\oplus}} D_\alpha$, contradicting the assumption that $\mathfrak{g}$ is simple. Therefore $k=1$ and the above splitting has only one non-trivial component, meaning that $(N,g)$ is de Rham irreducible. Let us denote by $n:=\mathrm{dim}(N)=\mathrm{dim}(\mathfrak{g})$. By the Berger-Simons holonomy theorem, $(N,g)$ is either an irreducible symmetric space, or its holonomy belongs to the list of Berger: $\mathrm{SO}(n)$, $\mathrm{U}(n/2)$,
$\mathrm{SU}(n/2),\ \mathrm{Sp}(n/4),\ \mathrm{Sp}(n/4)\cdot\mathrm{Sp}(1),\ \mathrm{G}_2$ for $n=7$, or $\mathrm{Spin}(7)$ for $n=8$ (see \cite{besse}, p. 301).

In the former case, Theorem~\ref{symmspaces} implies that $(N,g)$ is an irreducible symmetric space of type II and IV, so we are in case $(3)$. 

In the latter case, we remark that for $n\ge 8$, all holonomy groups in the list of Berger have dimension strictly larger than $n$. On the other hand, the hypothesis $\mathrm{ker}(\tau)=0$ together with the inclusion $\mathfrak{hol}\subseteq\tau(\mathfrak{g})$ proved in Lemma~\ref{alglemma} below, show that $\mathrm{dim}(\mathfrak{hol})\le n$. Thus $n\le 7$, and since $\mathfrak{g}$ is a simple Lie algebra of dimension $n$, the only possible case is $n=3$. Then the parallel $3$-form $\tau$ has to be a constant multiple of the Riemannian volume form of $(N,g)$, so we are in case $(1)$.

{\bf 2. $D_0$ is not trivial.} We will show that $k=1$. If we denote by $r:=\dim(D_0)\geq 1$, then using \eqref{eqtau}, the $3$-form $\tau$ can be decomposed as follows:
\begin{equation}\label{tau}\tau=\eta+\sum_{i=1}^{r}\sum_{\alpha=1}^{k}\xi_i\wedge \omega_{i\alpha}+\sum_{\alpha=1}^{k}\sigma_\alpha,\end{equation}
where $\eta\in \Lambda^3 D_0$, $\{\xi_i\}_{i=\overline{1,r}}$ is a basis of $D_0$, and $\omega_{i\alpha}\in\Lambda^2 D_\alpha$, $\sigma_\alpha\in\Lambda^3 D_\alpha$, for $i\in\{1, \dots, r\}$ and $\alpha\in\{1, \dots, k\}$. Note that the holonomy algebra $\mathfrak{hol}$ acts trivially on the two-forms $\omega_{j\alpha}\in\Lambda^2 D_\alpha$ so by the $\mathfrak{hol}$-irreducibility of $D_\alpha$, each $\omega_{j\alpha}$ is proportional to a complex structure on $D_\alpha$. 

For any $j\in \{1, \dots, r\}$ we have $\displaystyle\tau_{\xi_j}=\eta_{\xi_j}+\sum_{\alpha=1}^{k} \omega_{j\alpha}$. 
Thus the condition that $(\tau_X)_*\tau=0$ applied to $X=\xi_j$ yields:
\[0= (\tau_{\xi_j})_*\tau=(\eta_{\xi_j})_*\eta+\sum_{i=1}^{r}\sum_{\alpha=1}^{k}\eta_{\xi_j}\xi_i\wedge \omega_{i\alpha}+\sum_{i=1}^{r}\sum_{\alpha=1}^{k}\xi_i\wedge[\omega_{j\alpha},\omega_{i\alpha}]+\sum_{\alpha=1}^{k}(\omega_{j\alpha})_*\sigma_\alpha,\]
which, by comparing the types of the forms,  is equivalent to the following set of identities, for all $j,\ell\in \{1, \dots, r\}$ and $\alpha\in\{1, \dots, k\}$:
\begin{equation}\label{ident3}
\begin{cases}
(\eta_{\xi_j})_*\eta=0,\\[0.2cm]
\displaystyle [\omega_{j\alpha}, \omega_{\ell\alpha}]+\sum_{i=1}^r\eta(\xi_j,\xi_i, \xi_{\ell})\omega_{i\alpha}=0,\\[0.2cm]
(\omega_{j\alpha})_*\sigma_{\alpha}=0.
\end{cases}
\end{equation}

We first prove that all $3$-forms $\sigma_\alpha$ vanish, for $\alpha\in\{1, \dots, k\}$. Assuming, by contradiction, that there exists some $\alpha\in \{1, \dots, k\}$ with $\sigma_\alpha\neq 0$, then the last identity in \eqref{ident3} implies that $\omega_{j\alpha}=0$, for all $j\in \{1, \dots, r\}$ (indeed, the action of complex structures on odd-dimensional exterior forms is injective). 
Hence, by \eqref{tau}, $\tau\in\Lambda^3 D_{\alpha}\oplus\Lambda^3 D_{\alpha}^{\perp}$, which yields that $\mathfrak{g}$ splits in a direct sum of Lie subalgebras, $\mathfrak{g}=D_\alpha\oplus D_\alpha^{\perp}$, contradicting the assumption that $\mathfrak{g}$ is simple. This shows that $\sigma_\alpha=0$, for all $\alpha\in\{1, \dots, k\}$. Consequently, \eqref{tau} reads:
\begin{equation}\label{tau1}\tau=\eta+\sum_{i=1}^{r}\sum_{\alpha=1}^{k}\xi_i\wedge \omega_{i\alpha}.\end{equation}

The first identity in \eqref{ident3} ensures that $\mathfrak{g}_0:=D_0=\mathrm{T}_x N_0$ carries a Lie algebra structure, with bracket $[X,Y]:=\eta_XY$, such that $\eta$ is a Lie morphism from $\mathfrak{g}_0$ to $\Lambda^2 D_0$.
The second set of identities in \eqref{ident3} shows that for each $\alpha\in\{1, \dots, k\}$, the linear map
\[\rho_\alpha\colon \mathfrak{g}_0\to \Lambda^2 D_\alpha, \quad \rho_\alpha(\xi_i):=\omega_{i\alpha}, \text{ for } i\in\{1, \dots, r\}\]
is a representation of the Lie algebra $\mathfrak{g}_0$ on $D_\alpha$.
We next prove that 
\begin{equation}\label{split}
\tau\in \Lambda^3(\mathrm{im}(\rho_1^*)\oplus D_1)\oplus \Lambda^3\left(\mathrm{ker}(\rho_1)\oplus \left(\overset{k}{\underset{\alpha=2}{\oplus}}D_{\alpha}\right)\right).
\end{equation}

For this, we consider the decomposition $\mathfrak{g}_0=\mathrm{ker}(\rho_1)\oplus\mathrm{im}(\rho_1^*)$ and we claim that the following inclusions hold: 
\begin{equation}\label{incl}
\mathrm{im}(\rho_1^*)\subseteq\mathrm{ker}(\rho_\alpha), \quad \forall \alpha\in\{2, \dots, k\}.
\end{equation}
Indeed, if $\{e_i\}_i$ is a basis of $\mathrm{T}_x N$, the $4$-form $\displaystyle\sum_{i=1}^n\tau_{e_i}\wedge\tau_{e_i}$, vanishes by Lemma~\ref{elemprop}. In particular, for any $\alpha\in\{2, \dots, k\}$  the projection of this form onto  $\Lambda^2(D_1)\otimes\Lambda^2(D_\alpha)$ vanishes, which can be written as $$ 0=\sum_{j=1}^r\omega_{j1}\wedge\omega_{j\alpha}=\sum_{j=1}^r\rho_{1}(\xi_j)\wedge\rho_{\alpha}(\xi_j).$$
Taking, for any $\xi\in\mathfrak{g}_0$, the interior product (the metric adjoint of the wedge product) with $\rho_1(\xi)$, we obtain that $$ \sum_{j=1}^r\<\rho_1(\xi), \rho_{1}(\xi_j)\>\rho_\alpha(\xi_j)=0,$$ 
which means  that $\displaystyle\rho_1^*\rho_1(\xi)= \sum_{j=1}^r\<\rho_1(\xi), \rho_{1}(\xi_j)\>\xi_j\in\mathrm{ker}(\rho_\alpha)$. As $\mathrm{im}(\rho_1^*\rho_1)=\mathrm{im}(\rho_1^*)$, we obtain that $\mathrm{im}(\rho_1^*)\subseteq\mathrm{ker}(\rho_\alpha)$, thus proving our claim \eqref{incl}.

Since $\rho_1$ is a representation, $\mathrm{ker}(\rho_1)$ is an ideal of $\mathfrak{g}_0$. Moreover, since the metric on $\mathfrak{g}_0$ is ad-invariant, its orthogonal complement $\mathrm{im}(\rho_1^*)$ is an ideal too. Thus the canonical 3-form $\eta$ of the metric Lie algebra $\mathfrak{g}_0$ decomposes as $\eta=\eta_{1}+\eta_{2}$, with $\eta_{1}\in\Lambda^3 (\mathrm{im}(\rho_1^*))$, and $\eta_{2}\in\Lambda^3 (\mathrm{ker}(\rho_1))$.
Let $\{\xi_i\}_{i=\overline{1,d_1}}$ and $\{\zeta_j\}_{j=\overline{1,d_2}}$ be orthonormal bases of $\mathrm{im}(\rho_1^*)$ and $\mathrm{ker}(\rho_1)$ respectively.
According to \eqref{tau1} and using the inclusions \eqref{incl}, the $3$-form $\tau$ can be decomposed as follows:
\[\tau=\underbrace{\eta_{1}+\sum_{i=1}^{d_1}\xi_{i}\wedge\rho_{1}(\xi_{i})}_{\in \Lambda^3(\mathrm{im}(\rho_1^*)\oplus D_1)}+\underbrace{\eta_{2}+\overset{k}{\underset{{\alpha=2}}{\sum}}\sum_{j=1}^{d_2}\zeta_{j}\wedge\rho_{\alpha}(\zeta_{j})}_{\in\Lambda^3\left(\mathrm{ker}(\rho_1)\oplus\left(\overset{k}{\underset{\alpha=2}{\oplus}}D_{\alpha}\right)\right)},\]
thus proving the splitting \eqref{split}.
Since $\mathfrak{g}$ is simple and $D_1\neq 0$, it follows that $\mathrm{ker}(\rho_1)=0$ and $\overset{k}{\underset{\alpha=2}{\oplus}}D_{\alpha}=0$, whence $k=1$, $\mathfrak{g}=\mathfrak{g}_0\oplus D_1$, and $N=N_0\times N_1$. 

We now use again the inclusion $\mathfrak{hol}\subseteq \tau(\mathfrak{g})$ provided by Lemma~\ref{alglemma} in the Appendix. Since $\mathfrak{hol}$ preserves $D_1$ and $\displaystyle\tau=\eta+\sum_{i=1}^r \xi_i\wedge \omega_{i1}\in\Lambda^3D_0\oplus (\Lambda^1D_0\otimes\Lambda^2D_1)$, this implies that each element of the holonomy algebra is of the form $\tau(\xi)$, for some $\xi\in\mathfrak{g}_0$, \emph{i.e.} $\mathfrak{hol}\subseteq \tau(\mathfrak{g}_0)$. On the other hand, for all $j\in\{1, \dots, r\}$, $\omega_{j1}$ is a parallel $2$-form, so $[\mathfrak{hol}, \tau(\mathfrak{g}_0)]=0$, which shows in particular that $\mathfrak{hol}$ is a commutative Lie algebra. As the holonomy representation of $\mathfrak{hol}$ on $D_1$ is irreducible, the commutativity of $\mathfrak{hol}$ implies that $\dim(D_1)=2$ and $\dim(\mathfrak{g}_0)=1$, because $\rho_1\colon \mathfrak{g}_0\to\Lambda^2 D_1$ is injective. Thus $N$ is $3$-dimensional, which is again case $(3)$.
\end{proof}

Summing up, we have shown the following result:

\begin{theorem}\label{thmain}
Let $\xi$ be a non-zero vector field and let $\nu$ be a $3$-form on a complete simply connected Riemannian manifold $(M,g_M)$. Then the metric connection $\nabla_X:=\nabla^{g_M}_X+X\wedge \xi+\nu_X$ has $\nabla$-parallel twistor-free torsion if and only if $(M, g_M)$ 
is homothetic to a warped product $(N\times \mR, e^{2t}g_N+\di t^2)$, with $\xi=\frac{\partial}{\partial t}$ and $\nu=e^{3t}\tau$, where $(N, g_N,\tau)$ is a Riemannian product of complete simply connected Riemannian manifolds $(N_i,g_i)$ endowed with $3$-forms $\tau_i$, such that each  $(N_i,g_i,\tau_i)$ is of one of the following types:
\begin{enumerate}
		\item $(N_i,g_i)$ is a $3$-dimensional oriented Riemannian manifold and $\tau_i$ is a constant multiple of its Riemannian volume form.
		\item $N_i$ is a simple Lie algebra with an $\mathrm{ad}$-invariant metric $g_i$ and $\tau_i$ is a constant multiple of its canonical $3$-form defined in \eqref{defcanform}.
		\item $(N_i,g_i)$ is an irreducible symmetric space of type II or of type IV and $\tau_i$ is a constant multiple of the $3$-form constructed in Example \ref{exsym}.
		\item $(N_i,g_i)$ is a Riemannian manifold and $\tau_i=0$.
\end{enumerate}	

\end{theorem}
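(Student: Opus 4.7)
The plan is to assemble Theorem \ref{thmain} from the preceding results: essentially all of the substantive work is already carried out in Theorem \ref{thred}, Theorem \ref{thdec}, and the decomposition arguments opening Section 5, so the task reduces to stitching these together and checking the converse.

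First, in the forward direction, I apply Theorem \ref{thred} to obtain, up to homothety, the warped-product description $(M, g_M) \simeq (N \times \mathbb{R}, e^{2t}g_N + \di t^2)$ together with a parallel $3$-form $\tau$ on $N$ satisfying $(\tau_X)_*\tau = 0$. The proof of Theorem \ref{thred} also supplies the explicit formulas $\xi = \partial/\partial t$ and $\nu = e^{3t}\tau$, so no further work is needed at that step.

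Next, I decompose $(N, g_N, \tau)$ into a Riemannian product of factors of the four listed types. Since $\tau$ is $\nabla^{g_N}$-parallel, its kernel is a $\nabla^{g_N}$-parallel distribution; by completeness and simple-connectedness, $N$ splits isometrically as $N' \times N''$ with $\tau$ vanishing on $N'$, which will supply the type $(4)$ factors. On $N''$, the identity $(\tau_X)_*\tau = 0$ is the Jacobi identity for $[X,Y] := \tau_X Y$, turning $\mathfrak{g} := \mathrm{T}_x N''$ into a Lie algebra of compact type, semisimple because $\tau$ has trivial kernel on $N''$. The orthogonal decomposition $\mathfrak{g} = \oplus_i \mathfrak{g}_i$ into simple ideals, combined with Lemma \ref{splitsimplealg}, yields a Riemannian product $N'' = \prod_i N_i$ and a decomposition $\tau = \sum_i \tau_i$ with each $\tau_i$ still parallel, still satisfying the twistor-free identity, and with $\mathrm{T}_x N_i$ simple. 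Applying Theorem \ref{thdec} to each such factor identifies it as of type $(1)$, $(2)$, or $(3)$; the flat factor $N'$ supplies the type $(4)$ factors.

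For the converse, it suffices to check that each of the four listed types carries a parallel $3$-form with $(\tau_X)_*\tau = 0$: cases $(1)$ and $(4)$ are immediate, case $(2)$ follows from the $\mathrm{ad}$-invariance of the canonical $3$-form together with the Jacobi identity, and case $(3)$ is Example \ref{exsym}. On a Riemannian product $N = \prod_i N_i$ the sum $\tau = \sum_i \tau_i$ inherits the parallelism and the twistor-free identity, and the converse direction of Theorem \ref{thred} then produces on the warped product $(N \times \mathbb{R}, e^{2t}g_N + \di t^2)$ the asserted metric connection with $\nabla$-parallel twistor-free torsion. The main obstacles of the classification lie entirely in the earlier theorems; the only subtlety at this final stage is to check that splitting off $\mathrm{ker}(\tau)$ is compatible with the further decomposition into simple factors, which is immediate from the fact that both splittings are $\nabla^{g_N}$-parallel and $g_N$-orthogonal.
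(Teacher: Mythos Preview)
Your proposal is correct and follows exactly the route the paper takes: Theorem~\ref{thmain} is stated in the paper as a summary (``Summing up, we have shown the following result'') of Theorem~\ref{thred}, the kernel-splitting and simple-ideal decomposition via Lemma~\ref{splitsimplealg} at the start of Section~5, and Theorem~\ref{thdec}, with the converse handled by the last line of Theorem~\ref{thred}. One small slip: you refer to $N'$ as ``the flat factor,'' but $N' = \ker(\tau)$ need not be flat---type~(4) allows an arbitrary Riemannian manifold with $\tau_i = 0$, and you should not conflate this with the flat de~Rham factor $N_0$ appearing inside the proof of Theorem~\ref{thdec}.
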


\section{Appendix. Some representation theory}

We finally prove the two representation theoretical results that have been used above. 

\begin{lemma}\label{alglemma}
	Let $\mathfrak{h}$ be a Lie algebra and let $\rho\colon \mathfrak{h}\to\mathfrak{so}(V)\simeq \Lambda^2V$ be an orthogonal representation of $\mathfrak{h}$  on a finite dimensional Euclidean vector space $(V, \<\cdot, \cdot\>)$. Assume that $\tau\in\Lambda^3 V$ is an $\mathfrak{h}$-invariant $3$-form, such that $\tau_X\ne 0$ for all $X\in V\setminus\{0\}$ and $(\tau_X)_*\tau=0$ for all $X\in V$. Then the following inclusion holds:
	\[\rho(\mathfrak{h})\subseteq  \tau(V).\]
\end{lemma}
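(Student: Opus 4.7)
The plan is to endow $V$ with a natural Lie algebra structure coming from $\tau$, to check that this structure is semisimple of compact type, and to recognize each $\rho(A)$ as an inner derivation of it. I would first define $[X,Y]\definedas\tau_X Y$ on $V$. The hypothesis $(\tau_X)_*\tau=0$ unfolds (using that $A_*\sigma$ equals the commutator $[A,\sigma]$ when $\sigma$ is a $2$-form, as recalled after \eqref{action}) to
\[[\tau_X,\tau_Y]=\tau_{\tau_X Y},\]
which is simultaneously the Jacobi identity for this bracket and the statement that $\tau\colon V\to \mathfrak{so}(V)$ is a Lie algebra homomorphism. The total skew-symmetry of $\tau$ makes each $\tau_X$ skew-adjoint, so $\<\cdot,\cdot\>$ is $\mathrm{ad}$-invariant and $V$ is a Lie algebra of compact type; since $\tau_X\ne 0$ for $X\ne 0$, the center of $V$ is trivial, so $V$ is semisimple.

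Next I would translate the $\mathfrak{h}$-invariance of $\tau$ into a bracket identity. Viewed as a linear map $\tau\colon V\to \Lambda^2V$, the $\mathfrak{h}$-invariance gives
\[[\rho(A),\tau_X]=\tau_{\rho(A) X},\qquad\forall\ A\in\mathfrak{h},\ X\in V,\]
exactly as in \eqref{inv}. Combining this with the definition of the bracket yields
\[\rho(A)[X,Y]=\rho(A)\tau_X Y=[\rho(A),\tau_X]Y+\tau_X\rho(A)Y=\tau_{\rho(A)X}Y+\tau_X\rho(A)Y=[\rho(A)X,Y]+[X,\rho(A)Y],\]
so $\rho(A)$ is a derivation of the Lie algebra $(V,[\cdot,\cdot])$.

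The final step invokes the classical fact that every derivation of a semisimple Lie algebra is inner: there exists $Y_A\in V$ with $\rho(A)=\mathrm{ad}(Y_A)$, \emph{i.e.} $\rho(A)X=[Y_A,X]=\tau_{Y_A}X$ for all $X\in V$. This gives $\rho(A)=\tau_{Y_A}\in \tau(V)$, which is the desired inclusion. I do not anticipate a serious obstacle: the substance of the argument is the translation of the two hypotheses $(\tau_X)_*\tau=0$ and $\mathfrak{h}$-invariance of $\tau$ into the algebraic statements that $(V,[\cdot,\cdot])$ is semisimple and $\rho(\mathfrak{h})$ acts on it by derivations, after which the classical structure theorem concludes immediately.
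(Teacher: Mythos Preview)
Your proof is correct, but it takes a different route from the paper's. The paper gives a short, self-contained orthogonality argument entirely inside $\Lambda^2V$: it takes any $A$ in the orthogonal complement of $\tau(V)$ within $\tau(V)+\rho(\mathfrak{h})$, observes that $A_*\tau=0$ so $A_*\tau_X=\tau_{AX}$, and then computes $\langle A_*\tau_X,\tau_Y\rangle=\langle A,[\tau_X,\tau_Y]\rangle=\langle A,\tau_{\tau_XY}\rangle=0$; thus $\tau_{AX}$ lies in $\tau(V)\cap\tau(V)^\perp=0$, whence $AX=0$ for all $X$ and $A=0$. Your argument instead packages the hypotheses as structural facts---$(V,[\cdot,\cdot])$ is a semisimple Lie algebra and each $\rho(A)$ is a derivation of it---and then invokes the classical theorem that every derivation of a semisimple Lie algebra is inner. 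Your approach is more conceptual and explains \emph{why} the inclusion holds, and it dovetails with the Lie-algebraic viewpoint the paper itself adopts later in Section~5; the paper's approach is more elementary in that it avoids appealing to the external structure theorem (indeed, its computation is essentially a direct, in-context proof of ``derivations are inner'' via the invariant inner product on $\mathfrak{so}(V)$).
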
	

\begin{proof}
	Let us denote by $W$ the orthogonal complement of $\tau(V)$ in $(\tau(V)+\rho(\mathfrak{h}))$. We claim that $W=\{0\}$.
	
	Let $A\in W$. 
	We first notice that $A_*\tau=0$, because $A\in \tau(V)+\rho(\mathfrak{h})$ and $\tau$ is $\mathfrak{h}$-invariant and satisfies also $(\tau_X)_*\tau=0$, for all $X\in V$.	Then for any $X\in V$ we have:
\begin{equation}\label{a}A_*\tau_X=\tau_{AX}.
\end{equation}
	
	For any $Y\in V$ we compute 
	$$\<A_*\tau_X, \tau_Y\>=\<[A,\tau_X], \tau_Y\>=\<A, [\tau_X, \tau_Y]\>=\<A, \tau_{\tau_X Y}\>=0,$$
	where for the last equality we used that $A\in \tau(V)^\perp$. Thus, we obtained that $A_*\tau_X\in \tau(V)^\perp$, which together with \eqref{a} implies that 
	$0=A_*\tau_X=\tau_{AX}$. 
	 The hypothesis on $\tau$ yields $AX=0$, for all $X\in V$, and thus $A=0$. 
	 
	 This shows that $W=\{0\}$, whence $\tau(V)=\tau(V)+\rho(\mathfrak{h})$ and thus $\rho(\mathfrak{h})\subseteq  \tau(V)$.
\end{proof}	

The second result is an avatar of Lemma 3.4 in \cite{cms}:

\begin{lemma}\label{lemmalg}
	Let $\mathfrak{h}$ be a Lie algebra  of compact type and let $\rho_j\colon \mathfrak{h}\to \mathfrak{so}(V_j)$, for $j\in\{1,2\}$, be two orthogonal representations of $\mathfrak{h}$, such that $V_1$ is irreducible and there exists  $A\in\mathfrak{h}$ with $\rho_1(A)\neq 0$ and $\rho_2(A)=0$. If $\tau$ is an $\mathfrak{h}$-invariant $3$-form on $V_1\oplus V_2$, then $\tau(X_1, Y_2, Z_2)=0$, for every $X_1\in V_1$, and $Y_2, Z_2\in V_2$.
\end{lemma}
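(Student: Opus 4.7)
The plan is to exploit the $\mathfrak{h}$-equivariant decomposition of exterior forms on $V_1\oplus V_2$ in order to isolate the component of $\tau$ that corresponds to the claimed vanishing. Concretely, as $\mathfrak{h}$-modules one has
\[\Lambda^3(V_1\oplus V_2)=\Lambda^3V_1\ \oplus\ (\Lambda^2V_1\otimes V_2)\ \oplus\ (V_1\otimes \Lambda^2V_2)\ \oplus\ \Lambda^3V_2,\]
and since $\tau$ is $\mathfrak{h}$-invariant, each of its four components in this splitting is $\mathfrak{h}$-invariant separately. The statement $\tau(X_1,Y_2,Z_2)=0$ for all $X_1\in V_1$ and $Y_2,Z_2\in V_2$ is exactly the vanishing of the component lying in $V_1\otimes\Lambda^2V_2$, so the proof reduces to showing that this one component is zero.

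Next, I would view this component as an $\mathfrak{h}$-equivariant linear map $T\colon V_1\to \Lambda^2 V_2$, by contracting with an element of $V_1$. The $\mathfrak{h}$-invariance of the corresponding tensor translates into the identity
\[T(\rho_1(B)X)=\rho_2(B)_*T(X),\qquad\forall\ B\in\mathfrak{h},\ X\in V_1,\]
where $\rho_2(B)_*$ denotes the induced action on $\Lambda^2 V_2$. Specializing to $B=A$ and using $\rho_2(A)=0$, the right-hand side vanishes, so $\operatorname{Im}(\rho_1(A))\subseteq \ker(T)$. Since $\rho_1(A)\ne 0$, the kernel of $T$ is a non-zero subspace of $V_1$.

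Finally, since $T$ is $\mathfrak{h}$-equivariant, $\ker(T)$ is an $\mathfrak{h}$-submodule of $V_1$; irreducibility of $V_1$ forces $\ker(T)=V_1$, hence $T=0$. This yields the desired vanishing of $\tau(X_1,Y_2,Z_2)$ for all $X_1\in V_1$ and $Y_2,Z_2\in V_2$. There is no real obstacle here: the argument is essentially a Schur-type observation once the correct isotypic component is identified, and the compact-type assumption on $\mathfrak{h}$ is used only implicitly, through the existence of $\mathfrak{h}$-invariant inner products that make the decomposition above orthogonal and the representations $\rho_j$ orthogonal.
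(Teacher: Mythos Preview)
Your proof is correct and rests on the same Schur-type mechanism as the paper's: both arguments exhibit a non-zero $\mathfrak{h}$-submodule of $V_1$ and invoke irreducibility to conclude it is all of $V_1$. The difference is in which submodule is chosen. The paper builds the subspace $\rho_1(\ker\rho_2)(V_1)$ and checks it is $\mathfrak{h}$-invariant using that $\ker\rho_2$ is an ideal of $\mathfrak{h}$; once this subspace equals $V_1$, every $X_1$ is (a sum of terms) of the form $\rho_1(B)X_1'$ with $B\in\ker\rho_2$, and the invariance identity kills $\tau(X_1,Y_2,Z_2)$ directly. You instead isolate the $V_1\otimes\Lambda^2 V_2$ component, view it as an equivariant map $T\colon V_1\to\Lambda^2 V_2$, and observe that $\ker T$ is automatically $\mathfrak{h}$-invariant and contains $\operatorname{Im}\rho_1(A)\ne 0$. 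Your route is marginally more economical in that it uses only the single element $A$ (never the full ideal $\ker\rho_2$), and neither argument actually needs the compact-type hypothesis.
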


\begin{proof}
The subspace $V:=\rho_1(\mathrm{ker}(\rho_2))(V_1)$ of $V_1$ is non-zero since it contains the image of $\rho_1(A)$. We claim that $V$ is  $\mathfrak{h}$-invariant. Indeed, $\mathrm{ker}(\rho_2)$ is an ideal in $\mathfrak{h}$, so for all $C\in\mathfrak{h}$, $B\in\mathrm{ker}(\rho_2)$ and $X_1\in V_1$ we have $[C,B]\in \mathrm{ker}(\rho_2)$ and thus
	\[\rho_1(C)\rho_1(B)X_1=\rho_1([C,B])X_1+\rho_1(B)\rho_1(C)X_1\in V_1.\]
	
	Therefore $V=V_1$ by the irreducibility of $\rho_1$. Let $X_1\in V_1$ and $Y_2, Z_2\in V_2$. Since $V=V_1$, there exists $X_1'\in V_1$ and $B\in\mathrm{ker}(\rho_2)$, such that $X_1=\rho_1(B)X_1'$. Using the $\mathfrak{h}$-invariance of $\tau$, we compute:
	\[\tau(X_1, Y_2, Z_2)= \tau(\rho_1(B)X_1', Y_2, Z_2)=- \tau(X_1', \rho_2(B)Y_2, Z_2)-\tau(X_1', Y_2, \rho_2(B)Z_2)=0.\]
\end{proof}

\end{document}